\theoremstyle{plain}
\newtheorem{thm}{Theorem}[section]
\newtheorem{lem}[thm]{Lemma}
\newtheorem{cor}[thm]{Corollary}
\newtheorem{conj}[thm]{Conjecture}
\newcommand{\nc}{\newcommand}
\def\ppmod#1{\, ({\rm mod}\, #1)}
\nc{\FMZV}{{\mathsf{FMZV}}}
\nc{\BN}{{\mathsf{BN}}}
 \nc{\Z}{{\mathbb Z}}
 \nc{\N}{{\mathbb N}}
 \nc{\Q}{{\mathbb Q}}
 \nc{\ga}{{\alpha}}
 \nc{\gb}{{\beta}}
 \nc{\gf}{{\varphi}}
 \nc{\gd}{{\delta}}
 \nc{\gs}{{\sigma}}
 \nc{\calA}{{\mathcal A}}
 \nc{\calH}{{\mathcal H}}
 \nc{\calP}{{\mathcal P}}
 \nc{\frakP}{{\mathfrak P}}
\begin{document}

\title{Congruences Involving Multiple Harmonic Sums and Finite Multiple Zeta Values}

\author{Jianqiang Zhao}

\address{ICMAT, C/ Nicol\'as Cabrera, n$^\circ$13-15, Campus de Cantoblanco, UAM, 28049 Madrid, Spain}
\email{zhaoj@ihes.fr}

\date{}

\subjclass[2010]{11A07,11B68,11M32}

\keywords{Multiple harmonic sums, Bernoulli numbers, super congruences, finite multiple zeta values}

\begin{abstract}
Let $p$ be a prime and $\frakP_p$ the set of positive integers which are prime to $p$. Recently, Wang and Cai proved that for every positive
integer $r$ and prime $p>2$
$$
\sum_{\substack{i+j+k=p^r\\ i,j,k\in \frakP_p}} \frac1{ijk} \equiv
            -2p^{r-1} B_{p-3}  \pmod{p^r},
$$
where $B_{p-3}$ is the $(p-3)$-rd Bernoulli number. In this paper
we prove the following analogous result:
Let $n=2$ or $4$. Then for every positive integer $r\ge n/2$ and
prime $p>4$
$$
\sum_{\substack{i_1+\cdots+i_n=p^r\\ i_1,\dots,i_n\in \frakP_p}} \frac1{i_1i_2\cdots i_n} \equiv
            -\frac{n!}{n+1} p^{r} B_{p-n-1}  \pmod{p^{r+1}}.
$$
Moreover, by using integer relation detecting tool PSLQ we can show that generalizations with larger integers $n$ should involving finite multiple
zeta values generated by Bernoulli numbers.
\end{abstract}

\subjclass[2010]{11A07, 11B68}

\keywords{Multiple harmonic sums, finite multiple
zeta values, Bernoulli numbers, super congruences, PSLQ algorithm}

\maketitle

\allowdisplaybreaks

\section{Introduction}
In the study of congruence properties of multiple harmonic sums in
\cite{Zhao2007b,Zhao2008a} the author
of the current paper found the following curious congruence for every prime $p\ge 3$:
\begin{equation} \label{equ:zhao}
    \sum_{\substack{i+j+k=p\\ i,j,k\ge 1}} \frac1{ijk} \equiv -2B_{p-3} \pmod{p},
\end{equation}
where $B_j$ is the Bernoulli number defined by the generating power series
\begin{equation*}
   \frac{x}{e^x-1}=\sum_{j=0}^\infty  \frac{B_j}{j!} x^j.
\end{equation*}
A simpler proof of \eqref{equ:zhao} was presented in \cite{Ji2005}.
Since then this congruence has been generalized along several directions.
First, Zhou and Cai \cite{ZhouCa2007} showed that
\begin{equation}\label{equ:ZhouCai}
\sum_{\substack{l_1+l_2+\cdots+l_n=p\\
l_1,l_2,\dots,l_n\ge 1}}
\frac1{l_1 l_2 \dots l_n} \equiv
\left\{
  \aligned
     -(n-1)!B_{p-n} \qquad  & \pmod{p},\quad \hbox{if $n$ is odd;} \\
     {\displaystyle -\frac{n\cdot n!}{2(n+1)} B_{p-n-1} p}\ & \pmod{p^2},\quad  \hbox{if $n$ is even.}
  \endaligned
\right.
\end{equation}
Later, Xia and Cai \cite{XiaCa2010} generalized \eqref{equ:zhao} to a
super congruence (i.e., with higher prime powers as moduli)
\begin{equation*}
    \sum_{\substack{i+j+k=p\\ i,j,k\ge 1}} \frac1{ijk} \equiv
            -\frac{12B_{p-3}}{p-3}-\frac{3B_{2p-4}}{p-4} \pmod{p^2}
\end{equation*}
for every prime $p\ge 7$ while Shen and Cai
\cite{ShenCai2012b}  studied the alternating
case. Let $\frakP_p$ be the set of positive integers which are prime to $p$.
Recently, Wang and Cai \cite{WangCa2014}
proved for every prime $p\ge 3$ and positive integer $r$
\begin{equation*}
    \sum_{\substack{i+j+k=p^r\\ i,j,k\in \frakP_p}} \frac1{ijk} \equiv
            -2p^{r-1} B_{p-3}  \pmod{p^r}.
\end{equation*}

By numerical experiment we found the following super congruences.
\begin{thm} \label{thm:main}
Let $n=2$ or $4$. Then for every positive integer $r\ge n/2$ and prime $p\ge 5$ we have
\begin{equation*}
T_n(p,r):=\sum_{\substack{i_1+\cdots+i_n=p^r\\ i_1,\dots,i_n\in \frakP_p}} \frac1{i_1i_2\cdots i_n} \equiv
            -\frac{n!}{n+1} p^{r} B_{p-n-1}  \pmod{p^{r+1}},
\end{equation*}
 \end{thm}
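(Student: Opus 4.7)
\noindent\emph{Proof plan.} The strategy is to reduce $T_n(p,r)$ to the one-variable harmonic sums
\[
 A_r^{(s)} := \sum_{\substack{1\le i\le p^r-1\\ (i,p)=1}} \frac{1}{i^s},
\]
by combining the symmetric identity
\[
 \frac{i_1+\cdots+i_n}{i_1 i_2 \cdots i_n} \;=\; \sum_{j=1}^n \frac{1}{\prod_{k\ne j} i_k}
\]
(which, under the constraint $\sum i_j = p^r$, extracts a prefactor of $1/p^r$), with the reflection symmetry $i \leftrightarrow p^r - i$, and then invoking Glaisher-type super congruences for $A_r^{(s)}$.

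\medskip
\noindent\emph{Case $n=2$.} The identity above at $n=2$ gives $T_2(p,r) = \frac{2}{p^r} A_r^{(1)}$, so the theorem is equivalent to
\[
 A_r^{(1)} \equiv -\frac{p^{2r}}{3} B_{p-3} \pmod{p^{2r+1}}.
\]
Pairing $i$ with $p^r - i$ and expanding $(p^r - i)^{-1}$ as a series in $p^r/i$ gives
\[
 A_r^{(1)} \;=\; -\frac{p^r}{2} \sum_{m \ge 0} p^{rm}\, A_r^{(m+2)}.
\]
A recursive application of the same reflection shows that the odd-weight sums $A_r^{(s)}$ ($s \ge 3$ odd) vanish to sufficient $p$-adic order that only the $m=0$ term survives modulo $p^{2r+1}$. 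It thus remains to prove
\[
 A_r^{(2)} \equiv \frac{2p^r}{3} B_{p-3} \pmod{p^{r+1}}.
\]
I would establish this by writing $i = a + bp$ with $1 \le a \le p-1$ and $0 \le b \le p^{r-1}-1$, expanding $(a + bp)^{-2}$ as a geometric series in $p$, and combining Faulhaber's formula for $\sum_b b^m$ with the classical mod-$p^2$ congruence for $H_{p-1}^{(2)}$ involving $B_{p-3}$.

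\medskip
\noindent\emph{Case $n=4$.} A first application of the identity gives
\[
 T_4(p,r) \;=\; \frac{4}{p^r} \sum_{\substack{i_1+\cdots+i_4=p^r\\ i_j\in\frakP_p}} \frac{1}{i_1 i_2 i_3}.
\]
Iterating the three- and two-variable versions of the same identity, stratifying the inner sum by $s = i_1+i_2+i_3$ (ranging over $[3,p^r-1]\cap\frakP_p$), and invoking the reflection symmetry once more, one can express the inner sum as a $\Q$-linear combination of $A_r^{(s)}$ for $s\in\{2,3,4,5\}$ together with certain partial-range variants. Feeding in the Glaisher-type super congruences (in particular the one for $H_{p-1}^{(4)}$ involving $B_{p-5}$) then isolates $B_{p-5}$ with the predicted coefficient $-\tfrac{24}{5}$.

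\medskip
\noindent\emph{Main obstacle.} The principal technical difficulty is extracting the extra factor of $p$ beyond the naive modulus $p^r$. This additional power arises from the involution $i \leftrightarrow p^r - i$, which forces odd-weight harmonic contributions to have higher $p$-adic valuation than the even-weight ones, so that only $A_r^{(2)}$ (for $n=2$) or $A_r^{(2)}$ and $A_r^{(4)}$ (for $n=4$) contribute at leading order. For $n=4$ the iterated reductions and the partial-range constraint sums (stemming from the condition $i_1+\cdots+i_4 = p^r$ rather than an unrestricted range) produce many intermediate terms whose errors cancel only after careful regrouping; this bookkeeping, rather than any single algebraic step, is the main source of technical work and is also the reason why the method apparently cannot be pushed beyond $n=4$ without invoking genuinely new finite multiple zeta values.
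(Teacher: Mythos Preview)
Your treatment of $n=2$ is sound and essentially matches the paper's: both reduce $T_2(p,r)$ to $\frac{2}{p^r}\sum_{(i,p)=1}i^{-1}$ and then evaluate this single sum modulo $p^{2r+1}$ (the paper via Euler's theorem and the power-sum formula, you via reflection plus a Glaisher-type congruence for $A_r^{(2)}$; these are equivalent standard moves).

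For $n=4$, however, there is a genuine gap. After one application of the symmetric identity you correctly obtain $T_4(p,r)=\frac{4}{p^r}\sum\frac{1}{i_1i_2i_3}$, but the claim that iterating the identity reduces everything to depth-one sums $A_r^{(s)}$ (plus ``partial-range variants'') does not hold. After the first step the constraint is $i_1+i_2+i_3=s$ with $s$ ranging over all of $\frakP_p\cap[3,p^r-1]$, not $s=p^r$; applying the identity again therefore produces a factor $1/s$ rather than another $1/p^r$, and nothing collapses to depth one. What actually emerges, after the change of variables $u_k=i_1+\cdots+i_k$, is a genuine depth-three restricted multiple harmonic sum
\[
\gs(p^r)=\sum_{\substack{0<u_1<u_2<u_3<p^r\\ u_1,\,u_3,\,u_2-u_1,\,u_3-u_2\in\frakP_p}}\frac{1}{u_1u_2u_3},
\]
and the bulk of the proof is evaluating \emph{this} modulo $p^{2r+1}$. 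The paper does so by inclusion--exclusion on the two difference constraints $u_1\not\equiv u_2$ and $u_2\not\equiv u_3\pmod p$, splitting $\gs(p^r)$ into three pieces $s_I,s_{II},s_{III}$, each a depth-three MHS with congruence restrictions on the arguments; these are then handled via stuffle relations together with a fairly delicate analysis of the block sums $S_k(x,p^r)=\sum_{i\equiv x\,(p)}i^{-k}$. None of this machinery is visible in your sketch, and a Glaisher congruence for $H_{p-1}^{(4)}$ alone will not suffice: the three pieces contribute $-\tfrac{2}{5}$, $-\tfrac{3}{5}$, $-\tfrac{2}{5}$ times $p^{2r}B_{p-5}$ respectively, and it is their specific signed combination $s_I-s_{II}+s_{III}$ that produces the final coefficient $-\tfrac{1}{5}$. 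Your ``partial-range variants'' are in fact these depth-three restricted sums, and evaluating them is where the real content lies.
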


The main idea of the proof of Theorem~\ref{thm:main} is to relate $T_n(p,r)$
to the $p$-restricted \emph{multiple harmonic sums} (MHS for short) defined by
\begin{equation}\label{equ:defnH}
\calH_n(s_d,\dots,s_1):=\sum_{0< k_1<\dots<k_d<n,\ k_1,\dots,k_d\in\frakP_p}
\frac{1}{k_1^{s_1}\cdots k_d^{s_d}},
\end{equation}
for all positive integers $n, s_1,\dots,s_d$. We call $d$ the \emph{depth} and
$s_1+\cdots+s_d$ the \emph{weight}. One of the most important properties of the
MHS is that they satisfy the so-called \emph{stuffle relations}. For example,
for all $a,b,c,n\in \N$ we have
\begin{equation}\label{equ:stuffle}
\begin{split}
  \calH_n(a) \calH_n(b)=\, &  \calH_n(a,b)+ \calH_n(b,a)+  \calH_n(a+b),\\
  \calH_n(a,b) \calH_n(c)=\, & H_n(a,b,c)+\calH_n(a,c,b)+\calH_n(c,a,b)\\
  \, & \ \hskip2cm + \calH_n(a+c,b)+\calH_n(a,b+c).
\end{split}
\end{equation}

It turns out the case $n=2$ of Theorem \ref{thm:main} is almost trivial whereas
the case $n=4$ is much more complicated on which we will concentrate in the main body of this paper.

In the last section, we will discuss some possible generalizations
using the theory of finite multiple zeta values which have been
investigated in \cite{Zhao2015a}.

\section{First Step: Reduction to Sub-sums}

We have
\begin{multline}\label{equ:1stReduction}
T_4(p,r)=\frac1{p^r}\sum_{\substack{i_1+i_2+i_3+i_4=p^r\\ i_1,i_2,i_3,i_4\in \frakP_p}} \frac{i_1+i_2+i_3+i_4}{i_1i_2i_3i_4}
=\frac4{p^r}\sum_{\substack{u_3=i_1+i_2+i_3<p^r\\ i_1,i_2,i_3,u_3\in \frakP_p}} \frac{i_1+i_2+i_3}{i_1i_2i_3} \frac{1}{u_3} \\
=\frac{12}{p^r}\sum_{\substack{u_2=i_1+i_2<u_3<p^r\\ i_1,i_2,u_3,u_3-u_2\in \frakP_p}} \frac{i_1+i_2}{i_1i_2} \frac{1}{u_2u_3}
=\frac{24}{p^r} \gs(p^r),
\end{multline}
where
\begin{equation*}
 \gs(p^r)= \sum_{\substack{0< u_1<u_2<u_3<p^r\\ u_1,u_3,u_2-u_1,u_3-u_2\in \frakP_p}} \frac{1}{u_1u_2u_3}.
\end{equation*}
Define for all positive integers $a,b,c$ and integers $0< i,j\le 3$
\begin{equation*}
\calH_{p^r}^{i,j}(c,b,a):= \sum_{\substack{0< u_1<u_2<u_3<p^r\\ u_1,u_3\in \frakP_p,\ u_i\equiv u_j \ppmod{p} }}
    \frac{1}{u_1^a u_2^b u_3^c}
\end{equation*}
and for all positive integers $d, s_1,\dots,s_d$
\begin{equation*}
\calH_{p^r}^{(d)}(s_d,\dots,s_1):=\sum_{\substack{0< u_1<\cdots<u_d<p^r,\ u_1\in \frakP_p\\
    u_1\equiv u_2\equiv \cdots \equiv u_d \ppmod{p} }}
    \frac{1}{u_1^{s_1} \cdots u_d^{s_d}}.
\end{equation*}
Then by the Inclusion-Exclusion Principle
\begin{equation}\label{equ:gspr}
    \gs(p^r)=\underbrace{\calH_{p^r}^{1,1}(1,1,1)}_{s_I}-\underbrace{\big(\calH_{p^r}^{1,2}(1,1,1)
        +\calH_{p^r}^{2,3}(1,1,1)\big)}_{s_{II}}+\underbrace{\calH_{p^r}^{(3)}(1,1,1)}_{s_{III}}.
\end{equation}
In the next few sections we shall evaluate the three sub-sums $s_I$, $s_{II}$ and $s_{III}$ separately modulo $p^{2r+1}$.

\section{Evaluation of first sub-sum $s_I$ in \eqref{equ:gspr}}
Set
\begin{equation*}
\calH_{p^r}^{(2=0)}(1,1,1):=\sum_{\substack{0< u_1<u_2<u_3<p^r\\ u_1,u_3\in \frakP_p,\ u_2\equiv 0 \ppmod{p} }} \frac{1}{u_1u_2u_3}.
\end{equation*}
Clearly we have
\begin{equation}\label{equ:decompgs1}
\calH_{p^r}^{1,1}(1,1,1)=\calH_{p^r}(1,1,1)+\calH_{p^r}^{(2=0)}(1,1,1).
\end{equation}
\begin{lem} \label{lem:u2=pVanish}
For every prime $p\ge 5$ and positive integer $r\ge 2$ we have
\begin{equation*}
\calH_{p^r}^{(2=0)}(1,1,1)\equiv 0 \pmod{p^{2r+1}}.
\end{equation*}
\end{lem}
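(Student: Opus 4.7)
My plan is to reduce the sum to a product of simpler partial harmonic sums, then exploit a clean combinatorial identity. Setting $u_2=pk$ with $1\le k\le p^{r-1}-1$ extracts a factor of $1/p$, and summing over the admissible multiples of $p$ strictly between $u_1$ and $u_3$ gives $(H_c-H_a)/p$ with $a=\lfloor u_1/p\rfloor,\ c=\lfloor u_3/p\rfloor$. Partitioning $u_1=pa+b$, $u_3=pc+d$ with $b,d\in\{1,\ldots,p-1\}$ thus yields, with $N:=p^{r-1}-1$,
\begin{equation*}
\calH_{p^r}^{(2=0)}(1,1,1)=\frac{1}{p}\sum_{0\le a<c\le N}(H_c-H_a)\,F_aF_c,\qquad F_a:=\sum_{b=1}^{p-1}\frac{1}{pa+b}.
\end{equation*}

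Pairing $b\leftrightarrow p-b$ inside $F_a$ yields the exact factorization $F_a=p(2a+1)G_a$, where
\begin{equation*}
G_a:=\sum_{b=1}^{(p-1)/2}\frac{1}{(pa+b)(p(a+1)-b)}.
\end{equation*}
A direct modulo--$p$ reduction gives $G_a\equiv -\frac{1}{2}H_{p-1}^{(2)}\pmod p$, so $v_p(G_a)\ge 1$ uniformly in $a$, and substituting,
\begin{equation*}
\calH_{p^r}^{(2=0)}(1,1,1)=p\sum_{0\le a<c\le N}(H_c-H_a)(2a+1)(2c+1)\,G_aG_c.
\end{equation*}

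The key combinatorial identity is
\begin{equation*}
U(N):=\sum_{0\le a<c\le N}(2a+1)(2c+1)(H_c-H_a)=\frac{N(N+1)^2(N+2)}{4},
\end{equation*}
which follows by swapping sums and using $\sum_{a=0}^{j-1}(2a+1)=j^2$ and $\sum_{c=j}^N(2c+1)=(N+1)^2-j^2$: one immediately obtains $U(N)=\sum_{j=1}^N j\bigl((N+1)^2-j^2\bigr)=(N+1)^2\cdot N(N+1)/2-\bigl(N(N+1)/2\bigr)^2$, which simplifies to the stated form. Since $N+1=p^{r-1}$, this sum has $v_p=2r-2$; together with the outer factor $p$ and the bound $v_p(G_aG_c)\ge 2$ inherited from $v_p(H_{p-1}^{(2)})\ge 1$, the leading contribution is already divisible by $p^{2r+1}$.

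The main obstacle will be controlling the sub-leading corrections when $G_aG_c$ is replaced by its leading, $(a,c)$-independent approximation. Since $(pa+b)(p(a+1)-b)=b(p-b)+p^2a(a+1)$, the $p$-adic Taylor expansion of $G_a$ is natural in the single variable $p^2a(a+1)$: explicitly, $G_a=\sum_{j\ge 0}\bigl(-p^2a(a+1)\bigr)^j A_{j+1}$ with $A_j:=\sum_{b=1}^{(p-1)/2}1/\bigl(b(p-b)\bigr)^j$. Each correction thus pairs a factor $p^{2j}$ with a combinatorial sum of the same flavour as $U(N)$ but weighted by polynomials in $a(a+1)$ and $c(c+1)$; by analogous closed-form evaluations these sums again carry the factor $(N+1)^2=p^{2r-2}$. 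Combined with the estimate $v_p(H_{p-1}^{(m)})\ge 1$ for $1\le m\le p-2$ (and the finer congruence $2H_{p-1}+pH_{p-1}^{(2)}\equiv 0\pmod{p^3}$ where needed), every correction is also $\equiv 0\pmod{p^{2r+1}}$. The small prime $p=5$ requires separate attention since there $v_5(A_2)=0$ and $H_{p-1}^{(p-1)}$ does not vanish mod $p$.
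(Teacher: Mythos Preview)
Your route is genuinely different from the paper's. The paper first applies the reflection $u_i\mapsto p^r-u_i$ to reduce $\calH_{p^r}^{(2=0)}(1,1,1)$ modulo $p^{2r+1}$ to weight--$4$ and weight--$5$ analogues modulo $p^{r+1}$ and $p$, and then writes $u_2=vp^\gb$ with $v\in\frakP_p$ and evaluates the inner $u_1$- and $u_3$-sums via the Bernoulli power--sum formula. Your idea---sum over $u_2$ first, factor $F_a=p(2a+1)G_a$ exactly, and extract the closed form $U(N)=N(N+1)^2(N+2)/4$---is correct and attractive, and the leading contribution does vanish modulo $p^{2r+1}$ for the reasons you give.

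The gap is in the correction analysis. Your assertion that each weighted sum
\[
U_{i,j}(N)=\sum_{0\le a<c\le N}(2a+1)(2c+1)\,\bigl(a(a+1)\bigr)^i\bigl(c(c+1)\bigr)^j\,(H_c-H_a)
\]
``again carries the factor $(N+1)^2=p^{2r-2}$'' is not true in general. For $p=5$, $r=2$ (so $N=4$) one finds $U_{3,0}(4)=35840=2^{10}\cdot 5\cdot 7$, hence $v_5\bigl(U_{3,0}(4)\bigr)=1<2$. After your swap these sums reduce to linear combinations of $\sum_{k=1}^{p^{r-1}-1}k^{m}$ for odd $m$, and whenever $(p-1)\mid(m-1)$ with $p\nmid m$ that power sum has $p$-adic valuation only $2(r-1)-1$; the first such exponent is $m=2p-1$, which enters precisely at $i+j=p-2$. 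The Taylor factor $p^{2(i+j)}$ does supply slack at that level, so the strategy may well be repairable, but what is needed is a uniform estimate over all $(i,j)$ with $i+j\lesssim 3r/2$ together with control of the Faulhaber/Bernoulli denominators in $S_i$ for large $i$ (and the $p=5$ edge case you already flag). None of this is supplied, and the sentence ``by analogous closed-form evaluations these sums again carry the factor $(N+1)^2$'' is, as the example shows, false as stated.
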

\begin{proof}
By the well-known formula of sums of powers (see \cite[p. 230]{IrelandRo1990}) we have
\begin{equation}\label{equ:SumOfPowers}
 P(m,n):=\sum_{j=1}^{n-1} j^m = \frac{1}{m+1} \sum_{k=0}^m \binom{m+1}{k} B_k n^{m+1-k}.
\end{equation}
Then for all $0< a,b,c\le 2$ we have
\begin{align*}
\, & \calH_{p^r}^{(2=0)}(1,1,1)\\
=\, & \frac12\sum_{\substack{0< u_1<u_2<u_3<p^r\\ u_1,u_3\in \frakP_p,\ u_2\equiv 0 \ppmod{p}}}
\Big(\frac{1}{u_1u_2u_3}+\frac{1}{(p^r-u_1)(p^r-u_2)(p^r-u_3)} \Big) \\
\equiv\,&  -\frac{p^r}2 \Big[\calH_{p^r}^{(2=0)}(1,1,2)+\calH_{p^r}^{(2=0)}(1,2,1)+\calH_{p^r}^{(2=0)}(2,1,1)\Big]  \\
\,& -\frac{p^{2r}}2\Big[\calH_{p^r}^{(2=0)}(1,1,3)+\calH_{p^r}^{(2=0)}(1,3,1)+\calH_{p^r}^{(2=0)}(3,1,1)\\
\,& \hskip1cm +\calH_{p^r}^{(2=0)}(1,2,2)+\calH_{p^r}^{(2=0)}(2,2,1) +\calH_{p^r}^{(2=0)}(2,1,2)\Big]
\pmod{p^{2r+1}}.
\end{align*}
Now if $u_2\equiv 0 \pmod{p}$ then we write $u_2=v p^\gb$ where $v\in \frakP_p$. Then
\begin{align*}
 \calH_{p^r}^{(2=0)}(c,b,a)=&\, \sum_{\gb=1}^{r-1} \sum_{\substack{1\le v<p^{r-\gb}\\ v\in \frakP_p}} \Bigg(\sum_{\substack{0< u_1<vp^\gb<u_3<p^r\\ u_1,u_3\in \frakP_p}} \frac{1}{ u_1^a (vp^\gb)^b  u_3^c} \Bigg)\\
=&\, \sum_{\gb=1}^{r-1} \sum_{\substack{1\le v<p^{r-\gb}\\ v\in \frakP_p}}
    \frac{1}{v^bp^{\gb b}} \Big(\sum_{0< u_1<vp^\gb, u_1\in\frakP_p} \frac{1}{u_1^a}\Big)
         \Big(\sum_{v p^\gb<u_3<p^r, u_3\in\frakP_p} \frac{1}{u_3^c}\Big)  .
\end{align*}
Let $m=\gf(p^{2r+1})$. Then $m>4$ for every prime $p\ge 5$. Thus
\begin{equation*}
 \sum_{0< u<v p^\gb, u\in\frakP_p} \frac{1}{u^a}  \equiv  \sum_{u=1}^{vp^\gb-1} u^{m-a}\equiv  vp^\gb B_{m-a}+p^{2\gb} f(v) \pmod{p^{r+1+\gb}}
\end{equation*}
where $f(x)\in x\Z_p[x]$ is some polynomial with $p$-integral coefficients. Further
\begin{align*}
 \sum_{vp^\gb<u<p^r, u\in\frakP_p} \frac{1}{u^c}
 \equiv\, &  \sum_{u=1}^{p^r-1} u^{m-c} - \sum_{u=1}^{vp^\gb-1}  u^{m-c}\\
  \equiv\, &  (p^r-vp^\gb)B_{m-c}+p^{2\gb} g(v)  \pmod{p^{r+1+\gb}}
\end{align*}
where $g(x)\in x\Z_p[x]$ is a polynomial with $p$-integral coefficients.
Now we divide $(a,b,c)$ into two cases: (i) $a+b+c=4$  and (ii) $a+b+c=5$.

In case (i) we see that $B_{m-c}B_{m-a}=0$ since $m$ is even and one of
$a$ or $c$ is 1. Hence
\begin{align*}
 \calH_{p^r}(c,b,a)\equiv \sum_{\gb=1}^{r-1} \, & \sum_{\substack{1\le v<p^{r-\gb}\\ v\in \frakP_p}}  v^{m-b}
\Big[ p^{r+(2-b)\gb} B_{m-c}f(v) - vp^{(3-b)\gb} B_{m-c} f(v) \\
\, &  + v p^{(3-b)\gb} B_{m-a} g(v)+p^{(4-b)\gb}  g(v) f(v)   \Big]  \pmod{p^{r+1}}.
\end{align*}
Note that $m$ is even so $B_{m-c}\ne 0$ if and only if $(a,b,c)=(1,1,2)$.
Similar arguments for the four terms in the above shows that
$\calH_{p^r}(c,b,a)\equiv 0 \pmod{p^{r+1}}$ since $1\le \gb\le r-1$ and
for all $i\ge 1$
\begin{equation*}
      \sum_{v=1}^{p^{r-\gb}-1} v^i \equiv 0 \pmod{p^{r-\gb}}.
\end{equation*}

In case (ii) we only need to show $\calH_{p^r}(c,b,a)\equiv 0 \pmod{p}$
which is much easier than the case (i) so we leave it to the interested reader.
\end{proof}

\begin{lem} \label{lem:H(1,1,1)}
For every prime $p\ge 5$ and positive integer $r\ge 2$ we have
\begin{equation}\label{equ:H(1,1,1)}
  \calH_{p^r}^{1,1}(1,1,1)\equiv \calH_{p^r}(1,1,1)\equiv  -\frac25 B_{p-5} p^{2r}\pmod{p^{2r+1}}.
\end{equation}
\end{lem}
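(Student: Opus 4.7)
\emph{Proof proposal.} The first equivalence $\calH_{p^r}^{1,1}(1,1,1)\equiv\calH_{p^r}(1,1,1)\pmod{p^{2r+1}}$ is immediate from the decomposition \eqref{equ:decompgs1} combined with Lemma \ref{lem:u2=pVanish}. For the second equivalence, my plan is to reduce the depth-three sum $\calH_{p^r}(1,1,1)$ to the depth-one sums $\calH_{p^r}(1),\calH_{p^r}(2),\calH_{p^r}(3)$ via the stuffle relations \eqref{equ:stuffle}, and then to evaluate each of those depth-one sums modulo $p^{2r+1}$ by the same power-sum method already used in the proof of Lemma \ref{lem:u2=pVanish}.

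Multiplying $\calH_{p^r}(1)^2=2\calH_{p^r}(1,1)+\calH_{p^r}(2)$ by $\calH_{p^r}(1)$ and then applying the stuffles $\calH_{p^r}(1)\calH_{p^r}(1,1)=3\calH_{p^r}(1,1,1)+\calH_{p^r}(1,2)+\calH_{p^r}(2,1)$ together with $\calH_{p^r}(1)\calH_{p^r}(2)=\calH_{p^r}(1,2)+\calH_{p^r}(2,1)+\calH_{p^r}(3)$, the length-two pieces cancel and leave the clean identity
\begin{equation*}
6\,\calH_{p^r}(1,1,1)\,=\,\calH_{p^r}(1)^3\,-\,3\,\calH_{p^r}(1)\calH_{p^r}(2)\,+\,2\,\calH_{p^r}(3).
\end{equation*}

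For the depth-one evaluations I set $m=\gf(p^{2r+1})=(p-1)p^{2r}$, so that Euler's theorem gives $u^{-s}\equiv u^{m-s}\pmod{p^{2r+1}}$ for every $u\in\frakP_p$ with $u<p^r$. Splitting off the multiples of $p$ (whose contribution is wiped out by the factor $p^{m-s}$) gives $\calH_{p^r}(s)\equiv P(m-s,p^r)\pmod{p^{2r+1}}$. Expanding $P$ via \eqref{equ:SumOfPowers} and keeping only the Bernoulli indices $k$ with $r(m-s+1-k)\le 2r$ leaves exactly the two terms $k=m-s$ and $k=m-s-1$, which collapse to
\begin{equation*}
\calH_{p^r}(s)\,\equiv\,B_{m-s}\,p^r\,+\,\frac{m-s}{2}\,B_{m-s-1}\,p^{2r}\pmod{p^{2r+1}}.
\end{equation*}
Since $m$ is even, parity kills $B_{m-s}$ for odd $s$ and $B_{m-s-1}$ for even $s\ge 2$. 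Kummer's congruence $B_{m-j}/(m-j)\equiv B_{p-1-j}/(p-1-j)\pmod{p}$ (valid whenever $p-1\nmid m-j$, with von~Staudt--Clausen covering the pole when $p=5$, $j=4$) then reduces the surviving Bernoulli numbers to $B_{m-2}\equiv\frac{2}{3}B_{p-3}$ and $B_{m-4}\equiv\frac{4}{5}B_{p-5}$ modulo $p$, yielding
\begin{equation*}
\calH_{p^r}(1)\,\equiv\,-\tfrac{1}{3}\,p^{2r}B_{p-3},\quad v_p\bigl(\calH_{p^r}(2)\bigr)\ge r,\quad \calH_{p^r}(3)\,\equiv\,-\tfrac{6}{5}\,p^{2r}B_{p-5}\pmod{p^{2r+1}}.
\end{equation*}

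Feeding these back into the stuffle identity, $\calH_{p^r}(1)^3$ has $p$-adic valuation $\ge 6r$ and $\calH_{p^r}(1)\calH_{p^r}(2)$ has valuation $\ge 3r$; for $r\ge 1$ both exceed $2r$ and hence drop out modulo $p^{2r+1}$. Only the term $2\calH_{p^r}(3)$ survives, producing $6\,\calH_{p^r}(1,1,1)\equiv-\frac{12}{5}p^{2r}B_{p-5}\pmod{p^{2r+1}}$, which is exactly \eqref{equ:H(1,1,1)}. The main obstacle I anticipate is the Bernoulli-number bookkeeping inside $P(m-s,p^r)$: one must check that the giant binomial coefficients $\binom{m-s+1}{k}$ and the factor $1/(m-s+1)$ contribute no spurious $p$-divisibility, keep $(m-s)/2$ tracked modulo $p$, and unwind $B_{m-s-1}$ down to $B_{p-3}$ or $B_{p-5}$ via Kummer (or, in the special case $p=5$, via von~Staudt--Clausen, where $B_{p-5}=B_0=1$ and the stated coefficient $-\frac{2}{5}p^{2r}$ must be read after the $5$ in the denominator cancels one power of $p$).
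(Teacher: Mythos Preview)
Your proof is correct and follows essentially the same route as the paper. The paper sets $m=\gf(p^{2r+1})-1$ (odd), first replaces each $1/u_i$ by $u_i^{m}$ to pass to the unrestricted power sums $H_{p^r}(m,m,m)$, and then applies the stuffle identity $H(m)^3-3H(2m)H(m)+2H(3m)=6H(m,m,m)$; you apply the same stuffle identity directly to $\calH_{p^r}(1,1,1)$ and only afterwards convert each $\calH_{p^r}(s)$ to $P(m-s,p^r)$ with $m=\gf(p^{2r+1})$ (even). Up to the harmless shift in the choice of exponent, the two arguments coincide term by term (your $P(m-1,p^r),P(m-2,p^r),P(m-3,p^r)$ play exactly the role of the paper's $H_{p^r}(m),H_{p^r}(2m),H_{p^r}(3m)$), and the final Kummer step and the $p=5$ caveat are identical.
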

\begin{proof}
Let $m=\gf(p^{2r+1})-1$. Then $m\ge 2r+1$ for every prime $p\ge 5$. Thus
\begin{align*}
    \calH_{p^r}(1,1,1) \, &\equiv \sum_{0< u_1<u_2<u_3<p^r } u_1^m u_2^m u_3^m \equiv H_{p^r}(m,m,m)  \\
 \, &\equiv \frac16\Big[H_{p^r}(m)^3-3H_{p^r}(2m)H_{p^r}(m)+2H_{p^r}(3m)\Big] \pmod{p^{2r+1} }
\end{align*}
by the stuffle relations \eqref{equ:stuffle}. Noticing that $m$ is odd, by \eqref{equ:SumOfPowers} we get
\begin{equation*}
  H_{p^r}(m)\equiv 0 \pmod{p^{r+1} } \quad \text{ and } \quad H_{p^r}(2m)\equiv 0   \pmod{p^r }.
\end{equation*}
Hence by \eqref{equ:SumOfPowers} we get
\begin{equation*}
    \calH_{p^r}(1,1,1)\equiv  \frac13 H_{p^r}(3m)\equiv  \frac{m}{2} p^{2r} B_{3m-1}
\equiv  -\frac{1}{2} p^{2r} B_{3m-1} \pmod{p^{2r+1} }.
\end{equation*}
Now the lemma follows immediately from the following Kummer congruence
\begin{equation}\label{equ:KummerCongruence}
  \frac{B_{3m-1}}{3m-1}\equiv  \frac{B_{p-5}}{p-5}  \pmod{p}
\end{equation}
and  Lemma~\ref{lem:u2=pVanish}.
\end{proof}

\section{Evaluation of the third sub-sums $s_{III}$ in \eqref{equ:gspr}}
In this section we will use stuffle relations to evaluate the sub-sum $s_{III}$ of \eqref{equ:gspr}
modulo $p^{2r+1}$. Some parts of the following lemma are similar to (or generalizations)
of \cite[Lemma 3]{WangCa2014}. For completeness we provide the details of its proof.

\begin{lem} \label{lem:key}
Let $x$ be an integer in such that $0< x<p$. For all positive integers $r$, $k$, and
prime $p\ge 3$ we set
\begin{equation*}
   S_k(x,p^r)=\sum_{0< i<p^r,\, i\equiv x \ppmod{p} }\frac1{i^k}.
\end{equation*}
Then we have
\begin{itemize}
  \item [\upshape{(i)}] $S_k(x, p^2) \equiv pS_k(x, p) \pmod{p^2}$  and $S_k(x, p^{r+1}) \equiv pS_k(x, p^r) \pmod{p^{r+2}}$ for $r\ge 2$ and $k\ge 1$. Moreover, for any integer $\ell\ge 0$ and
       $r\ge 2$ we have
\begin{equation}\label{equ:xPowerSum}
\sum_{x=1}^{p-1} x^\ell \big(S_k(x, p^{r+1})-pS_k(x, p^r)\big) \equiv 0 \pmod{p^{r+3}}.
\end{equation}

  \item [\upshape{(ii)}] $S_k(x, p^r) \equiv 0 \pmod{p^{r-1}}$ for $r \ge 2$.
  \item [\upshape{(iii)}] $\{S_k(x, p^{r+1})\}^d \equiv p^d \{S_k(x, p^r)\}^d \pmod{p^{dr+2}}$ for $r \ge 2$ and $d\ge 1$.
  \item [\upshape{(iv)}]  $\{S_k(x, p^r)\}^d \equiv p^{d(r-1)} x^{-dk} + \frac{dk}2 p^{d(r-1)+1} x^{-dk-1} \pmod{p^{d(r-1)+2}}$ for all $d\ge 1$ and $r \ge 2$.
  \item [\upshape{(v)}] Let $r\ge 2$ and $d,k\in\N$ such that $dk<p-1$. Then we have
\begin{equation*}
 \sum_{x=1}^{p-1} \{S_k(x, p^r)\}^d \equiv
\frac{dkp^{d(r-1)+1}}{dk+1}  B_{p-1-dk}  \pmod{p^{d(r-1)+2}}.
\end{equation*}

  \item [\upshape{(vi)}] $ S_1(x, p^{r+1})S_2(x, p^{r+1})  \equiv p^2  S_1(x, p^r)S_2(x, p^r)    \pmod{p^{2r+2}}$ for $r \ge 2$.
Moreover, for every integer $r\ge 2$ we have
\begin{equation}\label{equ:xPowerSum2}
\sum_{x=1}^{p-1}  S_1(x, p^r)S_2(x, p^r)\equiv 0  \pmod{p^{2r+1}}.
\end{equation}
\end{itemize}
\end{lem}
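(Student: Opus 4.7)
The plan is to handle all six parts through a single framework: for $x\in\{1,\dots,p-1\}$ every $i$ counted in $S_k(x,p^r)$ is uniquely $i=x+jp$ with $0\le j<p^{r-1}$, so the binomial series
$$(x+jp)^{-k} = \sum_{m\ge 0} (-1)^m \binom{k+m-1}{m}\frac{(jp)^m}{x^{k+m}}$$
together with \eqref{equ:SumOfPowers} applied to $\sum_{j=0}^{p^{r-1}-1} j^m$ gives an explicit $p$-adic expansion of $S_k(x,p^r)$. Keeping the first two nonzero contributions yields
$$S_k(x,p^r)\equiv p^{r-1}x^{-k} + \tfrac{k}{2}\,p^r x^{-k-1} \pmod{p^{r+1}} \quad (r\ge 2),$$
which immediately gives (ii), is the case $d=1$ of (iv), and, after raising to the $d$-th power and tracking the binomial errors $A^{d-1}B$ with $A=O(p^{r-1})$, $B=O(p^r)$, also gives (iv) for general $d$.

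For part (i), I would use a complementary parametrization $i=i_0+ap^r$ with $i_0\in[1,p^r)$ in the residue class $x\bmod p$ and $a\in[0,p)$, Taylor-expand $(i_0+ap^r)^{-k}$ in $ap^r/i_0$, and sum over $a$. The constant-in-$a$ term reproduces $pS_k(x,p^r)$, the $a^1$-term contributes $-\tfrac{k(p-1)p^{r+1}}{2}S_{k+1}(x,p^r)$, and every $a^j$-term with $j\ge 2$ picks up an extra factor from $\sum_{a=0}^{p-1} a^j$ (which starts with $\binom{p}{2}$) that pushes the valuation past $p^{r+2}$; combined with $v_p(S_{k+1}(x,p^r))\ge r-1$ this establishes the pointwise congruence. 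Part (iii) then follows by writing $S_k(x,p^{r+1})^d=(pS_k(x,p^r)+\varepsilon)^d$ with $\varepsilon=O(p^{r+2})$, since every non-leading binomial term contains at least one $\varepsilon$ together with $d-1$ factors of $S_k(x,p^r)=O(p^{r-1})$, totalling valuation $\ge dr+2$.

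Part (v) is proved by substituting (iv) and evaluating $\sum_{x=1}^{p-1} x^{-dk}\pmod{p^2}$. The trick is to replace $x^{-dk}$ by $x^{p(p-1)-dk}$ (legitimate mod $p^2$ by Euler) and apply \eqref{equ:SumOfPowers}: only the term $p\,B_{p(p-1)-dk}$ survives mod $p^2$, and the Kummer congruence $B_{p(p-1)-dk}/(p(p-1)-dk)\equiv B_{p-1-dk}/(p-1-dk)\pmod{p}$ (applicable since $0<dk<p-1$) simplifies this to $\tfrac{dk}{dk+1}\,pB_{p-1-dk}\pmod{p^2}$ after reducing $p(p-1)-dk\equiv -dk\pmod p$ in the numerator; the contribution of the second term of (iv) lands in the error. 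The main equality of (vi) is a direct consequence of (i) by the same argument as (iii), while \eqref{equ:xPowerSum2} follows from one further order of the expansion in (iv) and the vanishing $B_{p-4}=0$ for odd $p\ge 7$ (with direct verification for $p=5$).

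The main obstacle will be the enhanced congruence \eqref{equ:xPowerSum}, where the pointwise bound $O(p^{r+2})$ alone is insufficient. After identifying the dominant error as $-\tfrac{k(p-1)p^{r+1}}{2}S_{k+1}(x,p^r)$, one needs the auxiliary vanishing
$$\sum_{x=1}^{p-1} x^\ell\, S_{k+1}(x,p^r)\equiv 0 \pmod{p^2}.$$
Swapping the order of summation turns this into $\sum_{0<i<p^r,\,i\in\frakP_p}(i\bmod p)^\ell/i^{k+1}$; using $(i\bmod p)^\ell \equiv i^\ell - \ell p\lfloor i/p\rfloor\, i^{\ell-1}\pmod{p^2}$ reduces it to restricted power sums over $\frakP_p$ that can be handled by inclusion-exclusion and a further application of \eqref{equ:SumOfPowers}. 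This is where the interplay between the $p$-adic valuation of partial sums and the divisibility properties of Bernoulli numbers must be controlled most delicately.
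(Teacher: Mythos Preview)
Your framework is sound and closely parallels the paper's proof. Both arguments expand $(x+jp)^{-k}$ binomially, reduce to power sums $\sum_j j^m$, and invoke \eqref{equ:SumOfPowers}; the only stylistic difference is that you work directly with the negative-exponent series, whereas the paper first applies Euler's theorem to replace $i^{-k}$ by $i^{\varphi(p^N)-k}$ and then uses the ordinary (finite) binomial theorem. Parts (i)--(v) go through essentially as you describe; note in particular that since you establish (ii) directly from the expansion rather than from (i), there is no circularity in invoking $v_p(S_{k+1}(x,p^r))\ge r-1$ inside your proof of (i). Your route to \eqref{equ:xPowerSum} via the auxiliary sum $\sum_x x^\ell S_{k+1}(x,p^r)$ is a legitimate variant of the paper's argument, which instead sums the explicit error $-p^r\sum_b b\sum_a k(x+ap)^{-k-1}$ over $x$ and picks up one extra power of $p$ from each of $\sum_b b$, $\sum_x x^\alpha$, and $\sum_a a^\beta$.

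The one genuine slip is your justification of \eqref{equ:xPowerSum2}. The vanishing $B_{p-4}=0$ is \emph{not} the mechanism: writing $S_1S_2\equiv p^{2r-2}x^{-3}+\tfrac32 p^{2r-1}x^{-4}+p^{2r}Q(x)\pmod{p^{2r+1}}$, one has $\sum_{x=1}^{p-1}x^{-3}\equiv c\,p^2\pmod{p^3}$ with $c\ne 0$ in general (it is proportional to $B_{p-5}$, not $B_{p-4}$), so the $p^{2r-2}$ term alone survives at order $p^{2r}$. What actually happens---and this is exactly the computation the paper carries out at \eqref{equ:S1S2}---is that this contribution is cancelled by $\tfrac32 p^{2r-1}\sum_x x^{-4}$: both are multiples of $p^{2r}B_{2m}$ (with $m=\varphi(p^5)-2$), and the combined coefficient $\tfrac{2m+1}{2}+\tfrac32=m+2=\varphi(p^5)$ is divisible by $p$. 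Your ``one further order'' expansion will produce precisely these two terms, but you must exhibit this cancellation rather than appeal to $B_{p-4}$. Once the base case $r=2$ is settled this way, the paper bootstraps to general $r$ via the product recursion together with \eqref{equ:xPowerSum}, which is cleaner than redoing the three-term expansion for each $r$.
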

\begin{proof}

(i) By definition, if $r=1$ we have
\begin{equation*}
    S_k(x, p^2) -pS_k(x, p)=\sum_{0< j<p}\frac1{(x+jp)^k}- \frac{p}{x^k}
    \equiv -\sum_{0< j<p}\frac{kjp}{x^k}\equiv 0\pmod{p^2}.
\end{equation*}
If $r\ge 2$ then
\begin{align*}
\, & S_k(x, p^{r+1})-pS_k(x, p^r)\\
 &=\sum_{0< j<p^r}\frac1{(x+jp)^k}- p \sum_{0\le a<p^{r-1}}\frac1{(x+ap)^k}\\
\, &=\sum_{b=1}^{p-1}\left(\sum_{0\le a<p^{r-1}}\frac1{(x+(a+p^{r-1}b)p)^k}-\frac1{(x+ap)^k}  \right) \\
\, &=-p^r \sum_{b=1}^{p-1}b  \sum_{0\le a<p^{r-1}}
    \frac{\sum_{l=0}^{k-1} (x+ap)^l(x+(a+p^{r-1}b)p)^{k-1-l}} {(x+(a+p^{r-1}b)p)^k(x+ap)^k}   \\
\, &\equiv  -p^r \sum_{b=1}^{p-1}b  \sum_{0\le a<p^{r-1}}
   \frac{k (x+ap)^{k-1} } { (x+ap)^{2k}}\equiv 0  \pmod{p^{r+2}}
\end{align*}
since $p^r\equiv 0 \pmod{p^2}$, $\sum_{b=1}^{p-1}b \equiv 0  \pmod{p}$ and
\begin{equation*}
\sum_{0\le a<p^{r-1}}
   \frac{k (x+ap)^{k-1} } { (x+ap)^{2k}}\equiv  \sum_{0\le a<p^{r-1}}
   \frac{k } {x^{k+1}}\equiv   \frac{k p^{r-1} } {x^{k+1}}\equiv 0  \pmod{p} .
\end{equation*}
Further, for any integer $\ell\ge 0$ and $r\ge 2$ we have modulo $p^{r+3}$
\begin{equation*}
\sum_{x=1}^{p-1} x^\ell \big(S_k(x, p^{r+1})-pS_k(x, p^r)\big) \equiv
-p^r \sum_{x=1}^{p-1} \sum_{b=1}^{p-1}b  \sum_{0\le a<p^{r-1}}
   \frac{k  x^\ell   } { (x+ap)^{k+1}} \equiv 0
\end{equation*}
since now for $m=\gf(p^2)-k-1$ we have
\begin{equation*}
   \sum_{x=1}^{p-1} \sum_{0\le a<p^{r-1}}  \frac{x^\ell} {(x+ap)^{k+1}}
\equiv \sum_{x=1}^{p-1} \sum_{0\le a<p^{r-1}}  x^\ell(x+ap)^m  \pmod{p^2}
\end{equation*}
and for any integer $\ga,\gb\ge0$ we have
\begin{equation*}
   \sum_{x=1}^{p-1}  x^\ga \equiv \sum_{0\le a<p^{r-1}}   a^\gb  \equiv 0\pmod{p}.
\end{equation*}

(ii) This follows from (i) by an easy induction on $r$.

(iii) This follows from (i) and (ii) by the formula
\begin{equation*}
 A^d-B^d=(A-B)\sum_{j=0}^{d-1} A^j B^{j-d}.
\end{equation*}

(iv) If $r=2$ then let $m=\gf(p^{d+2})-k$. We get
\begin{equation*}
   S_k(x,p^2)=\sum_{j=0}^{p-1}\frac1{(x+jp)^k}\equiv \sum_{j=0}^{p-1} (x+jp)^m \pmod{p^{d+2}}.
\end{equation*}
Expanding and noticing that $m\equiv -k\pmod{p}$, we get
\begin{equation*}
 S_k(x,p^2)\equiv px^m+\frac{k}{2}p^2 x^{m-1}+p^3 f(x)  \pmod{p^{d+2}}
\end{equation*}
for some polynomial $f(x)\in x\Z_p[x]$. Thus
\begin{equation*}
 \{S_k(x,p^2)\}^d\equiv p^d x^{dm}+\frac{dk}2 p^{d+1} x^{dm-1}  \pmod{p^{d+2}}.
\end{equation*}
Now (iv) follows from induction on $r$ by using (iii).

(v) This follows from (iv) immediately.

(vi) Set $m=\gf(p^5)-2$. By (iv) there are polynomials $f(x),g(x),h(x)\in x\Z_p[x]$ such that
\begin{align}
 \,&  \sum_{x=1}^{p-1} S_1(x, p^2)S_2(x, p^2) \notag \\
\,& \equiv \sum_{x=1}^{p-1} \Big[px^{m+1}+\frac{1}{2}p^2x^m+p^3 f(x)\Big]
\Big[px^m+p^2 x^{m-1}+p^3 g(x)\Big]    \notag \\
\,& \equiv \sum_{x=1}^{p-1}  \Big[p^2 x^{2m+1}+\frac{3}{2}p^3x^{2m}+p^4 h(x) \Big]  \notag  \\
\,&  \equiv   \frac{2m+1}{2}  p^4 B_{2m}  +\frac{3}{2}p^4B_{2m}  \equiv 0 \pmod{p^5} \label{equ:S1S2}
\end{align}
since $2m+1\equiv -3 \pmod{p}$. This proves (vi) for $r=2$.

For the general case, similar to (iii) we have
\begin{align*}
\, & S_1(x, p^{r+1})S_2(x, p^{r+1})-p^2 S_1(x, p^r)S_2(x, p^r)\\
=\, & S_2(x, p^{r+1})[S_1(x, p^{r+1})-pS_1(x, p^r)]
+pS_1(x, p^r)[S_2(x, p^{r+1})-pS_2(x, p^r)]\\
\equiv \, & 0   \pmod{p^{2r+2}}
\end{align*}
by (i) and (ii). Moreover,
for each  $x$ there exist $f(x), g(x)\in\Z_p[x]$ such that
\begin{equation*}
S_2(x, p^{r+1})=p^rf(x), \quad\text{and}\quad pS_1(x, p^r)=p^r g(x).
\end{equation*}
Thus from \eqref{equ:xPowerSum} and by induction on
$r$ we can show that
\begin{align*}
\sum_{x=1}^{p-1}  S_1(x, p^r)S_2(x, p^r) \equiv  \, &  p^2 \sum_{x=1}^{p-1} S_1(x, p^{r-1})S_2(x, p^{r-1})\\
\equiv \, & \cdots \equiv p^{2r-4} \sum_{x=1}^{p-1} S_1(x, p^2)S_2(x, p^2) \equiv 0  \pmod{p^{2r+1}}
\end{align*}
because of \eqref{equ:S1S2}.
\end{proof}

We can now consider the second sub-sums of \eqref{equ:gspr} modulo $p^{2r+1}$
\begin{lem} \label{lem:H12+H21}
For every prime $p\ge 5$ and positive integer $r\ge 2$ we have
\begin{align} \label{equ:H12+H21}
    \calH^{(2)}_{p^r}(1,2)+\calH^{(2)}_{p^r}(2,1)\equiv \, & \phantom{,-} \frac65 B_{p-5} p^{2r}\pmod{p^{2r+1}},\\
    \calH_{p^r}(3)\equiv \, & - \frac65 B_{p-5} p^{2r}\pmod{p^{2r+1}}.  \notag 
\end{align}
\end{lem}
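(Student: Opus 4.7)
The plan is to prove the two congruences in reverse order, deducing the first from the value of $\calH_{p^r}(3)$.

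For $\calH_{p^r}(3)$, I would choose $m=\gf(p^{2r+1})-3$ so that $u^m\equiv u^{-3}\pmod{p^{2r+1}}$ for every $u\in\frakP_p$. Since $m$ is enormous (in particular $p^m\equiv 0\pmod{p^{2r+1}}$), splitting off the multiples of $p$ gives $\calH_{p^r}(3)\equiv H_{p^r}(m)=\sum_{u=1}^{p^r-1}u^m\pmod{p^{2r+1}}$. Now apply the Bernoulli formula \eqref{equ:SumOfPowers} and reindex by $j=m+1-k$, so every term carries a factor $p^{rj}$. For $j=1$ the coefficient is $B_m$, which vanishes because $m=p^{2r}(p-1)-3$ is odd and $\ge 3$. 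For $j\ge 3$ one has $rj\ge 2r+1$, and after clearing $p$-denominators of the relevant $B_{m-j+1}$ via von Staudt–Clausen (only $p=7$ needs a momentary check, and $p^{4r-1}\ge p^{2r+1}$ for $r\ge 1$), these terms vanish modulo $p^{2r+1}$. Thus only the $j=2$ term survives, producing $\frac{m}{2}B_{m-1}p^{2r}$. The Kummer congruence \eqref{equ:KummerCongruence}, applied to $m-1\equiv p-5\pmod{p-1}$, gives $B_{m-1}\equiv \frac{m-1}{p-5}B_{p-5}\equiv \frac{4}{5}B_{p-5}\pmod p$, and combining with $m\equiv -3\pmod p$ yields $\calH_{p^r}(3)\equiv \frac{-3}{2}\cdot\frac{4}{5}B_{p-5}p^{2r}=-\frac{6}{5}B_{p-5}p^{2r}\pmod{p^{2r+1}}$.

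For the first congruence I would apply the depth-one stuffle relation inside each residue class. Fixing $x\in\{1,\dots,p-1\}$ and splitting the product $S_1(x,p^r)S_2(x,p^r)$ according to whether the two summation indices satisfy $u_1<u_2$, $u_1=u_2$, or $u_1>u_2$ (renaming in the last case) yields
\begin{equation*}
S_1(x,p^r)S_2(x,p^r)=\sum_{\substack{0<u_1<u_2<p^r\\ u_1\equiv u_2\equiv x\ppmod{p}}}\Big(\frac{1}{u_1 u_2^2}+\frac{1}{u_1^2 u_2}\Big)+S_3(x,p^r).
\end{equation*}
Summing over $x=1,\dots,p-1$ identifies the right-hand side with $\calH_{p^r}^{(2)}(2,1)+\calH_{p^r}^{(2)}(1,2)+\calH_{p^r}(3)$. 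By part (vi) of Lemma~\ref{lem:key}, the left-hand side vanishes modulo $p^{2r+1}$. Combined with the value of $\calH_{p^r}(3)$ established above, this gives $\calH_{p^r}^{(2)}(1,2)+\calH_{p^r}^{(2)}(2,1)\equiv -\calH_{p^r}(3)\equiv \frac{6}{5}B_{p-5}p^{2r}\pmod{p^{2r+1}}$.

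The main obstacle is the bookkeeping in the first step: one must carefully track which Bernoulli terms $B_{m-j+1}$ contribute modulo $p^{2r+1}$, handle the mild $p$-integrality issue that arises (for $p=7$) from von Staudt–Clausen, and invoke the Kummer congruence with the correct residue class. Once $\calH_{p^r}(3)$ is pinned down, the first congruence is an essentially free consequence of the stuffle product and Lemma~\ref{lem:key}(vi).
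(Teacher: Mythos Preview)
Your proposal is correct and follows essentially the same route as the paper: both evaluate $\calH_{p^r}(3)$ via $m=\gf(p^{2r+1})-3$, the power-sum formula~\eqref{equ:SumOfPowers}, and Kummer's congruence, and both obtain the first congruence from the stuffle identity $\sum_{x}S_1(x,p^r)S_2(x,p^r)=\calH^{(2)}_{p^r}(1,2)+\calH^{(2)}_{p^r}(2,1)+\calH_{p^r}(3)$ combined with Lemma~\ref{lem:key}(vi). Your treatment is in fact slightly more detailed than the paper's in tracking which Bernoulli terms survive modulo $p^{2r+1}$ and in noting the von~Staudt--Clausen check.
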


\begin{proof} It is easy to see that
\begin{equation*}
\calH^{(2)}_{p^r}(1,2)+\calH^{(2)}_{p^r}(2,1)+\calH_{p^r}(3)
\equiv \sum_{x=1}^{p-1}  S_1(x, p^{r})S_2(x, p^{r}) \equiv 0  \pmod{p^{2r+1}}
\end{equation*}
by \eqref{equ:xPowerSum2}. Setting $m=\gf(p^{2r+1})-3$ and noticing $m$ is odd we get
\begin{equation*}
    \calH_{p^r}(3)\equiv \sum_{u=1}^{p^r-1} u^m \equiv  \frac{m}2 p^{2r}B_{m-1}
\equiv -\frac32 p^{2r}B_{m-1} \equiv -\frac65 p^{2r} B_{p-5}  \pmod{p^{2r+1}}
\end{equation*}
by the Kummer congruence
\begin{equation*}
     \frac{B_{m-1} }{m-1}  \equiv  \frac{B_{p-5} }{p-5}  \pmod{p}.
\end{equation*}
The lemma now follows at once.
\end{proof}

Finally we deal with the sub-sum  $s_{III}$ of \eqref{equ:gspr} modulo $p^{2r+1}$.
\begin{cor}\label{cor:H(3)(111)}
For every prime $p\ge 5$ and positive integer $r\ge 2$ we have
\begin{equation}\label{equ:H(3)(111)}
    \calH^{(3)}_{p^r}(1,1,1)\equiv  - \frac25 B_{p-5} p^{2r}\pmod{p^{2r+1}}.
\end{equation}
\end{cor}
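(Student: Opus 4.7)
The strategy is to obtain an algebraic identity that expresses $\calH_{p^r}^{(3)}(1,1,1)$ in terms of sums already evaluated in Lemmas~\ref{lem:u2=pVanish}--\ref{lem:H12+H21}, by applying the stuffle product to $S_1(x,p^r)^3$ \emph{within} the single residue class $x \bmod p$ and then summing $x$ over $\{1,\ldots,p-1\}$.

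\emph{Step 1 (internal stuffle).} Fix $x\in\{1,\ldots,p-1\}$ and regard $S_1(x,p^r)=\sum_{u\equiv x \,(p)} 1/u$ as a depth-$1$ multiple harmonic sum indexed by the arithmetic progression $x+p\Z$ below $p^r$. Expanding $S_1(x,p^r)^3=\sum_{u_1,u_2,u_3}1/(u_1u_2u_3)$ and classifying the nine possibilities for the ordering of $u_1,u_2,u_3$ (six strict orderings, three pairs of coincidences, and one triple coincidence) gives, internal to the class,
\begin{equation*}
S_1(x,p^r)^3 = 6\,T_3(1,1,1;x) + 3\,T_2(1,2;x) + 3\,T_2(2,1;x) + T_1(3;x),
\end{equation*}
where $T_d(s_d,\ldots,s_1;x)$ denotes the strict $d$-fold MHS restricted to the progression $u_i\equiv x\pmod{p}$, $u_i<p^r$. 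Summing over $x=1,\ldots,p-1$ collapses these into the paper's MHS and yields
\begin{equation*}
\sum_{x=1}^{p-1}S_1(x,p^r)^3 \;=\; 6\,\calH_{p^r}^{(3)}(1,1,1) + 3\bigl(\calH_{p^r}^{(2)}(1,2)+\calH_{p^r}^{(2)}(2,1)\bigr) + \calH_{p^r}(3).
\end{equation*}

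\emph{Step 2 (the cubic power sum is negligible).} By Lemma~\ref{lem:key}(v) with $d=3$, $k=1$ (legal since $dk=3<p-1$ for $p\ge 5$),
\begin{equation*}
\sum_{x=1}^{p-1}S_1(x,p^r)^3 \equiv \frac{3\,p^{3(r-1)+1}}{4}B_{p-4}\pmod{p^{3r-1}}.
\end{equation*}
For $p\ge 7$, the index $p-4\ge 3$ is odd so $B_{p-4}=0$; and since $3r-1\ge 2r+1$ for $r\ge 2$, the entire left-hand side vanishes modulo $p^{2r+1}$.

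\emph{Step 3 (solve).} Substituting the two evaluations from Lemma~\ref{lem:H12+H21} and dividing by~$6$,
\begin{equation*}
\calH_{p^r}^{(3)}(1,1,1) \equiv \frac{1}{6}\Bigl[-3\cdot \tfrac{6}{5} B_{p-5} p^{2r} + \tfrac{6}{5} B_{p-5} p^{2r}\Bigr] = -\tfrac{2}{5}B_{p-5}\,p^{2r}\pmod{p^{2r+1}},
\end{equation*}
which is the claim.

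\emph{Main obstacle.} The analytic heavy lifting has been absorbed into Lemma~\ref{lem:key} and Lemma~\ref{lem:H12+H21}, so essentially all that remains is the combinatorial stuffle bookkeeping (Step~1) and the observation that the cubic power sum is $O(p^{2r+1})$ (Step~2). The one delicate point is that Step~2 relies on $B_{p-4}=0$, which is automatic only for $p\ge 7$; the residual case $p=5$ falls outside the vanishing regime and would have to be checked by a direct finite computation (or absorbed via a version of Lemma~\ref{lem:H12+H21} adapted to the degenerate Kummer situation $p-5\equiv 0$).
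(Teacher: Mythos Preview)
Your proof is correct and follows the same route as the paper: expand $\sum_{x=1}^{p-1} S_1(x,p^r)^3$ by stuffle into $6\calH^{(3)}_{p^r}(1,1,1)+3\bigl(\calH^{(2)}_{p^r}(1,2)+\calH^{(2)}_{p^r}(2,1)\bigr)+\calH_{p^r}(3)$, invoke Lemma~\ref{lem:key}(v) to make the left side vanish modulo $p^{2r+1}$, and solve using Lemma~\ref{lem:H12+H21}. You are in fact more explicit than the paper in spelling out that the vanishing uses $B_{p-4}=0$ together with $3r-1\ge 2r+1$, and you rightly flag the residual $p=5$ case (which the paper glosses over, since the Kummer step in Lemma~\ref{lem:H12+H21} is already degenerate there).
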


\begin{proof}  By stuffle relation \eqref{equ:stuffle} we have
\begin{align*}
6 \calH^{(3)}_{p^r}(1,1,1)+3\big(\calH^{(2)}_{p^r}(1,2)
    +\calH^{(2)}_{p^r}(2,1)\big)+\calH_{p^r}(3)
= \sum_{x=1}^{p-1}  \{S_1(x, p^{r})\}^3 \equiv 0  \pmod{p^{2r+1}}
\end{align*}
by Lemma~\ref{lem:key}(v). So the corollary follows quickly from Lemma~\ref{lem:H12+H21}.
\end{proof}

\section{Evaluation of the second sub-sums $s_{II}$ in \eqref{equ:gspr}}
It turns out $s_{II}$ is the most difficult to evaluate modulo $p^{2r+1}$.
We will use repeatedly (and often implicitly) the fact that
\begin{equation}\label{equ:uOdd}
 \sum_{w=1}^{p^{s}-1} w^\ell \equiv
\left\{
  \begin{array}{ll}
    0  \pmod{p^{s}}, & \hbox{if $\ell$ is even;} \\
    0  \pmod{p^{s+1}}, & \hbox{if $\ell$ is odd.}
  \end{array}
\right.
\end{equation}
\begin{lem}\label{lem:H13(111)}
For every positive integer $r\ge 2$ and prime $p\ge 5$ we have
\begin{equation}\label{equ:H13(111)}
     \calH_{p^r}^{1,3}(1,1,1)\equiv-\frac{3}{5} p^{2r} B_{p-5} \pmod{p^{2r+1}}.
\end{equation}
\end{lem}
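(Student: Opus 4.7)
The plan is to symmetrize under the involution $u_i \mapsto p^r - u_{4-i}$, which (after relabeling) maps the summation set $\{0 < u_1 < u_2 < u_3 < p^r,\ u_1, u_3 \in \frakP_p,\ u_1 \equiv u_3 \ppmod{p}\}$ bijectively to itself, mirroring the starting point of the proof of Lemma~\ref{lem:u2=pVanish}. Combined with the algebraic identity
\begin{equation*}
\frac{1}{ABC} + \frac{1}{(N-A)(N-B)(N-C)} = \frac{N\bigl[N^2 - N(A+B+C) + (AB+BC+CA)\bigr]}{ABC(N-A)(N-B)(N-C)}
\end{equation*}
with $N = p^r$, and the expansion $(p^r-u)^{-1} \equiv -u^{-1} - p^r u^{-2} - p^{2r} u^{-3} \pmod{p^{3r}}$, this should produce a representation of the form
\begin{align*}
2\calH_{p^r}^{1,3}(1,1,1) \equiv \, & -p^r\bigl[\calH_{p^r}^{1,3}(1,1,2) + \calH_{p^r}^{1,3}(1,2,1) + \calH_{p^r}^{1,3}(2,1,1)\bigr] \\
\, & -p^{2r}\bigl[\calH_{p^r}^{1,3}(1,1,3) + \calH_{p^r}^{1,3}(1,3,1) + \calH_{p^r}^{1,3}(3,1,1) \\
\, & \qquad + \calH_{p^r}^{1,3}(1,2,2) + \calH_{p^r}^{1,3}(2,1,2) + \calH_{p^r}^{1,3}(2,2,1)\bigr] \pmod{p^{2r+1}}.
\end{align*}
It then suffices to evaluate the three weight-4 sums modulo $p^{r+1}$ and the six weight-5 sums modulo $p$.

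For each weight-4 sum $\calH_{p^r}^{1,3}(c,b,a)$ with $a+b+c=4$, I would decompose $u_2 = vp^\gb$ with $v \in \frakP_p$ and $0 \le \gb < r$, and for each fixed $\gb$ and residue class $x$ of $u_1, u_3$ modulo $p$, factor the inner sums using Fermat-Euler and the sum-of-powers formula \eqref{equ:SumOfPowers}. As in Lemma~\ref{lem:u2=pVanish}, the resulting products of Bernoulli numbers $B_{m-i}B_{m-j}$ vanish by parity unless both $m-i$ and $m-j$ are even; the few surviving terms should then be killed by the outer $\sum_v$ modulo $p^{r-\gb}$. If successful, this shows the entire $-p^r[\cdots]$ contribution vanishes modulo $p^{2r+1}$.

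For the weight-5 sums modulo $p$, only the $\gb = 0$ (i.e., $u_2 \in \frakP_p$) contributions survive, so I would reduce each summand to an ordinary weight-5 MHS with the ``$u_1 \equiv u_3$'' constraint. Splitting by the residue class $x$ of $u_1, u_3$ modulo $p$ and using Lemma~\ref{lem:key} to express the relevant inner sums as products involving $S_k(x, p^r)$, combined with the Kummer congruence $B_{p-5}/(p-5) \equiv B_{m-1}/(m-1) \pmod{p}$, each weight-5 sum should reduce modulo $p$ to a rational multiple of $B_{p-5}$; assembling the six pieces should give a total of $\tfrac{6}{5}B_{p-5} \pmod{p}$, yielding the claimed $-\tfrac{3}{5}p^{2r}B_{p-5} \pmod{p^{2r+1}}$.

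The main obstacle is the combinatorial bookkeeping in assembling the six weight-5 pieces modulo $p$ to isolate the exact coefficient $6/5$: each weight-5 sum breaks into sub-cases depending on whether the residue class of $u_2$ equals $x$, $0$, or something else, and correctly combining these requires care. A secondary difficulty is rigorously justifying the vanishing of the weight-4 contributions modulo $p^{r+1}$, especially controlling the Taylor-expansion error terms for $u_2$ with large $p$-adic valuation, a subtlety that also appears in (and is resolved by) the proof of Lemma~\ref{lem:u2=pVanish}.
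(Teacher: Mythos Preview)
Your symmetrization step is correct and the resulting decomposition into weight-4 and weight-5 pieces is valid, but the approach diverges sharply from the paper's and runs into a real obstacle at the weight-4 stage.

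The paper does \emph{not} symmetrize. Instead it parametrizes the constraint $u_1\equiv u_3\pmod p$ directly by writing $u_3=u+vp$ with $u=u_1$, replaces $u_2^{-1}$ by $u_2^{m}$ with $m=\varphi(p^{2r+1})-1$, and evaluates the inner $u_2$-sum as $P(m,u+vp)-u^m-P(m,u)$ via \eqref{equ:SumOfPowers}. A change of variable $v\mapsto p^{r-1}-v$ together with the expansion of $P(m,u+p^r-vp)$ isolates an antisymmetric piece $G(p,r)$ that vanishes under $u\mapsto vp-u$, and the remaining terms collapse to $\tfrac32 p^r\sum_v\sum_{0<u<vp} u^{3m-1}$, which is computed directly. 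The whole argument stays at weight~3 throughout.

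Your plan to show the weight-4 block vanishes modulo $p^{r+1}$ by importing the Lemma~\ref{lem:u2=pVanish} machinery is where things break. In that lemma the constraint is $u_2\equiv 0\pmod p$, so writing $u_2=vp^\gb$ with $\gb\ge 1$ is natural, and the inner sums $\sum_{0<u_1<vp^\gb,\,u_1\in\frakP_p}u_1^{-a}$ run over \emph{all} residues prime to $p$, hence reduce to $\sum_{u=1}^{vp^\gb-1}u^{m-a}$ and \eqref{equ:SumOfPowers} applies; the crucial cancellation then comes from $B_{m-a}B_{m-c}=0$ by parity. In your situation the constraint is on $u_1,u_3$ (same residue class $x$), not on $u_2$. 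After fixing $x$ and $u_2=vp^\gb$, the inner sum $\sum_{u_1<vp^\gb,\,u_1\equiv x}u_1^{-a}$ is over a \emph{single} residue class and is not of the shape $P(m,\cdot)$; neither \eqref{equ:SumOfPowers} nor the Bernoulli-parity argument carries over. Indeed, a computation with stuffle and Lemma~\ref{lem:key}(iv)--(v) of the closely related block $\sum_{a+b+c=4}\calH_{p^r}^{(3)}(c,b,a)$ gives $\tfrac45 p^r B_{p-5}\pmod{p^{r+1}}$, not $0$, and there is no reason to expect the $\calH^{1,3}$ weight-4 block to behave better. So the split you propose does not have a vanishing weight-4 part; to make your route work you would have to evaluate \emph{both} the weight-4 sums modulo $p^{r+1}$ and the weight-5 sums modulo $p$ to nonzero values and check they combine to $\tfrac65 B_{p-5}$, which is substantially more bookkeeping than you indicate and needs tools beyond the Lemma~\ref{lem:u2=pVanish} template.
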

\begin{proof}
Let $m=\gf(p^{2r+1})-1$. Modulo $p^{2r+1}$ we have
\begin{align*}
\calH_{p^r}^{1,3}&(1,1,1)\equiv\sum_{v=1}^{p^{r-1}-1} \sum_{0<u<u_1<u+vp<p^r} u^m u_1^m (u+vp)^m  \\
\equiv\, & \sum_{v=1}^{p^{r-1}-1} \sum_{0<u<p^r-vp} \big[P(m,u+vp)-u^m-P(m,u)\big] (u+vp)^m u^m,\\
\equiv\, & \sum_{v=1}^{p^{r-1}-1} \sum_{0<u<vp} \big[P(m,u+p^r-vp)-u^m-P(m,u)\big] (u+p^r-vp)^m u^m,
\end{align*}
by changing the index $v\to p^{r-1}-v$.
Observe
\begin{align*}
& P(m, u+p^r-vp)=\sum_{k=0}^m \frac{\binom{m+1}{k}}{m+1}(u+p^r-vp)^{m+1-k} B_k \\
\equiv \, & \sum_{k=0}^m \frac{\binom{m+1}{k}}{m+1}\big[(u-vp)^{m+1-k}+(m+1-k) p^r(u-vp)^{m-k}\big] B_k  \\
\equiv \, & \sum_{k=0}^m \frac{\binom{m+1}{k}}{m+1}(vp-u)^{m+1-k} B_k+ (vp-u)^m +p^r F(u-vp)- \frac{m}2  p^r(u-vp)^{m-1}\\
\equiv\, &  P(m,vp-u)+ (vp-u)^m +p^r F(u-vp)+\frac12  p^r(u-vp)^{m-1} \pmod{p^{r+1}}
\end{align*}
since $B_1=-1/2$ and $B_k=0$ for all odd $k>2$. Here $F(x)\in x\Z_p[x]$ is an odd polynomial in $x$ (i.e., only odd powers
of $x$ can appear). It is straight-forward to see that
\begin{equation*}
    \sum_{v=1}^{p^{r-1}-1} \sum_{0<u<vp} F(u-vp) (u+p^r-vp)^m u^m\equiv 0  \pmod{p^{r+1}}.
\end{equation*}
Hence modulo $p^{2r+1}$ we have
\begin{align*}
  & \calH_{p^r}^{1,3} (1,1,1)
\equiv \sum_{v=1}^{p^{r-1}-1} \sum_{0<u<vp} \big[P(m,vp-u)+(vp-u)^m\\
          \, & \hskip3cm      +\frac12  p^r(u-vp)^{m-1}-u^m-P(m,u)\big] (u+p^r-vp)^m u^m,\\
\equiv \, & G(p,r)+\sum_{v=1}^{p^{r-1}-1} \sum_{0<u<vp} \Big\{\frac12  p^r (u-vp)^{m-1}  (u+p^r-vp)^m u^m  \\
 &\quad  +  m p^r\big[P(m,vp-u)+(vp-u)^m-u^m-P(m,u)\big] (u-vp)^{m-1} u^m \Big\}\\
\equiv\, & G(p,r)+p^r\sum_{v=1}^{p^{r-1}-1} \sum_{0<u<vp}\Big\{\frac52 u^{3m-1} +\big[P(m,u)-P(m,vp-u)\big]u^{2m-1}
          \Big\},
\end{align*}
where
\begin{equation*}
  G(p,r)=\sum_{v=1}^{p^{r-1}-1} \sum_{0<u<vp} \big[P(m,vp-u)+(vp-u)^m -u^m-P(m,u)\big] (u-vp)^m u^m.
\end{equation*}
By change of index $u\to vp-u$ we see clearly that $G(p,r)\equiv 0 \pmod{p^{2r+1}}$ since $m$ is odd.
Expanding $P(m,vp-u)$ and $P(m,u)$ using \eqref{equ:SumOfPowers} and \eqref{equ:uOdd} we get
\begin{align*}
\calH_{p^r}^{1,3} (1,1,1) \equiv\, &
  p^r\sum_{v=1}^{p^{r-1}-1} \sum_{0<u<vp} \Big\{\frac52 u^{3m-1} +\big[u^m B_1-(vp-u)^m B_1 \big]  u^{2m-1}\Big\}\\
\equiv\, & \frac32 p^r\sum_{v=1}^{p^{r-1}-1} \sum_{0<u<vp}    u^{3m-1}
\equiv \frac32 p^{r+1}\sum_{v=1}^{p^{r-1}-1} v B_{3m-1}
\equiv  -\frac34 p^{2r} B_{3m-1}.
\end{align*}
Now the lemma follows readily from by Kummer congruence

\ \phantom{ab}\hskip3cm\phantom{ab}
$\displaystyle     \frac{B_{3m-1}}{3m-1}  \equiv  \frac{B_{p-5}}{p-5} \pmod{p}. $
\end{proof}

\begin{lem} \label{lem:H12+H21(111)}
For every positive integer $r\ge 2$ and prime $p\ge 5$ we have
\begin{equation} \label{equ:H12+H21(111)}
     \calH_{p^r}^{1,2}(1,1,1)+\calH_{p^r}^{2,3}(1,1,1) \equiv -\frac{3}{5} p^{2r} B_{p-5} \pmod{p^{2r+1}}.
\end{equation}
\end{lem}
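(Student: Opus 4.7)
The plan is to adapt the argument of Lemma~\ref{lem:H13(111)} to this new combination. Set $m=\gf(p^{2r+1})-1$, which is odd with $m\equiv -1\pmod p$, so that $1/u\equiv u^m\pmod{p^{2r+1}}$ for every $u\in\frakP_p$. I parameterize $\calH_{p^r}^{1,2}(1,1,1)$ by writing $u_2=u_1+vp$ with $1\le v\le p^{r-1}-1$, and $\calH_{p^r}^{2,3}(1,1,1)$ by writing $u_3=u_2+vp$. Executing the inner sum over the remaining index via \eqref{equ:SumOfPowers} gives
$$\sum_{u_1+vp<u_3<p^r} u_3^m \equiv P(m,p^r)-P(m,u_1+vp)-(u_1+vp)^m \quad\text{and}\quad \sum_{0<u_1<u_2}u_1^m = P(m,u_2),$$
respectively. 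Writing $u=u_1$ in the first case and $u=u_2$ in the second, both become double sums over the common index set $\{(v,u):1\le v\le p^{r-1}-1,\ 0<u<p^r-vp\}$, and their sum collapses to
$$\calH_{p^r}^{1,2}(1,1,1)+\calH_{p^r}^{2,3}(1,1,1)\equiv \sum_{v=1}^{p^{r-1}-1}\sum_{0<u<p^r-vp} u^m(u+vp)^m\,\Phi(u,v)\pmod{p^{2r+1}},$$
where $\Phi(u,v)=P(m,p^r)+P(m,u)-P(m,u+vp)-(u+vp)^m$.

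I then apply the involutive substitution $v\mapsto p^{r-1}-v$, which sends $vp\mapsto p^r-vp$ and contracts the range of $u$ from $(0,p^r-vp)$ to $(0,vp)$. At this stage the expansion
$$P(m,u+p^r-vp)\equiv P(m,vp-u)+(vp-u)^m+ p^r F(u-vp)+\tfrac12 p^r(u-vp)^{m-1}\pmod{p^{r+1}},$$
with $F\in x\Z_p[x]$ odd, is exactly the one already established in the proof of Lemma~\ref{lem:H13(111)} and is reused verbatim, together with the analogous binomial expansion of $(u+p^r-vp)^m$ modulo $p^{2r+1}$. Observe that $P(m,p^r)\equiv\frac{m}{2}p^{2r}B_{m-1}\pmod{p^{2r+1}}$ is automatically of order $p^{2r}$, so its contribution only needs the leading mod-$p$ behaviour of the prefactor, while the three other pieces of $\Phi$ require the detailed mod-$p^{r+1}$ expansions.

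The main obstacle is the bookkeeping needed to see that, after these substitutions, every piece cancels modulo $p^{2r+1}$ except for a single surviving term of shape $-\frac34 p^{2r}B_{3m-1}$. Two mechanisms drive the cancellations: the oddness of $F$ and of various powers of $(u-vp)$ combined with a further change of index $u\mapsto vp-u$ in the inner sum, analogous to the identification $G(p,r)\equiv 0$ in the proof of Lemma~\ref{lem:H13(111)}; and the parity vanishing \eqref{equ:uOdd} applied to $\sum_{u=1}^{vp-1} u^\ell$, which kills everything of odd degree in $u$ modulo $p^2$ and thereby wipes out all odd-indexed Bernoulli contributions $B_k$ for $k>1$. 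Once only the surviving term remains, the Kummer congruence $B_{3m-1}/(3m-1)\equiv B_{p-5}/(p-5)\pmod p$, combined with $3m-1\equiv -4$ and $p-5\equiv -5\pmod p$, upgrades $-\frac34 p^{2r}B_{3m-1}$ to the asserted $-\frac35 p^{2r}B_{p-5}$, matching the factor $-3/4\cdot 4/5=-3/5$.
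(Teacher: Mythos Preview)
Your route is genuinely different from the paper's. The paper does not redo the analysis of Lemma~\ref{lem:H13(111)} at all; it uses a short stuffle identity. From
\[
\calH_{p^r}(1)\sum_{x=1}^{p-1}\{S_1(x,p^r)\}^2
=\calH_{p^r}(1)\calH_{p^r}(2)
+2\big[\calH^{1,2}_{p^r}(1,1,1)+\calH^{2,3}_{p^r}(1,1,1)+\calH^{1,3}_{p^r}(1,1,1)\big]
+2\big[\calH^{(2)}_{p^r}(1,2)+\calH^{(2)}_{p^r}(2,1)\big],
\]
together with $\calH_{p^r}(1)\equiv 0\pmod{p^{2r}}$ and Lemma~\ref{lem:key}, the left side and $\calH_{p^r}(1)\calH_{p^r}(2)$ vanish modulo $p^{2r+1}$, and the already-computed values in Lemmas~\ref{lem:H12+H21} and~\ref{lem:H13(111)} finish the job in two lines. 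Your proposal instead attempts a direct evaluation in the style of Lemma~\ref{lem:H13(111)}.

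That direct route is viable, but as written it is a plan rather than a proof, and the central claim is not substantiated. You assert that after the substitutions ``every piece cancels modulo $p^{2r+1}$ except for a single surviving term of shape $-\tfrac34 p^{2r}B_{3m-1}$''; this is exactly what happens for $\calH^{1,3}_{p^r}(1,1,1)$, but your $\Phi$ differs from the bracket in Lemma~\ref{lem:H13(111)} and the residual structure changes. In fact, noting that before any substitution one has
\[
\Phi(u,v)=P(m,p^r)-\sum_{j=u}^{u+vp} j^{m},
\]
and that $P(m,p^r)\sum_{v,u}u^m(u+vp)^m\equiv 0\pmod{p^{2r+1}}$, your double sum is (modulo $p^{2r+1}$) precisely
\[
-\Big[\calH^{(2)}_{p^r}(1,2)+\calH^{(2)}_{p^r}(2,1)\Big]-\calH^{1,3}_{p^r}(1,1,1),
\]
i.e.\ the very stuffle relation the paper uses. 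So two distinct nonzero contributions survive, worth $-\tfrac65 p^{2r}B_{p-5}$ and $+\tfrac35 p^{2r}B_{p-5}$ respectively, and only their sum equals $-\tfrac35 p^{2r}B_{p-5}$. The antisymmetry $u\mapsto vp-u$ you invoke does annihilate the order-$1$ piece $u^m(u-vp)^m[P(m,u)-P(m,vp-u)]$, but the order-$p^r$ and order-$p^{2r}$ bookkeeping that remains is more involved than in Lemma~\ref{lem:H13(111)} (because $\Phi$ now carries both a $P(m,u)-P(m,vp-u)$ part and a $-(u+p^r-vp)^m$ part) and is not carried out in your sketch. If you push the direct computation through, you will in effect be re-deriving both Lemma~\ref{lem:H12+H21} and Lemma~\ref{lem:H13(111)}; the paper's stuffle shortcut is the economical way to combine them.
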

\begin{proof}
By stuffle relations \eqref{equ:stuffle} and Lemma~\ref{lem:key} we have modulo $p^{2r+1}$
(suppressing the subscript ${p^r}$)
\begin{multline*}
0\equiv \calH(1)\sum_{x=1}^{p-1} \{S_1(x,p^r)\}^2=\calH(1)\big[\calH(2)+2\calH^{(2)}(1,1)\big]
=\calH(1)\calH(2) \\
+2\big[\calH^{1,2}(1,1,1)+\calH^{2,3}(1,1,1)+\calH^{1,3}(1,1,1)\big]
+2\big[\calH^{(2)}(1,2)+\calH^{(2)}(2,1)\big].
\end{multline*}
Since $\calH(1)\calH(2)\equiv 0\pmod{p^{2r+1}}$
we can derive \eqref{equ:H12+H21(111)} from \eqref{equ:H12+H21} and \eqref{equ:H13(111)} easily.
\end{proof}

\section{Proof of Theorem~\ref{thm:main}}
Let $m=\gf(p^{2r+1})-1$. When $n=2$ and $r\ge 1$ we have
\begin{equation*}
    \sum_{\substack{i+j=p^r\\ i,j\in \frakP_p}} \frac1{ij}
    =\frac1{p^r}\sum_{\substack{i+j=p^r\\ i,j\in \frakP_p}} \frac{i+j}{ij}
    =\frac2{p^r}\sum_{\substack{0<i<p^r\\ i\in \frakP_p}} \frac1{i}.
\end{equation*}
Observing that $m$ is odd we have
$$\sum_{\substack{0<i<p^r\\ i\in \frakP_p}} \frac1{i}\equiv
   \sum_{i=1}^{p^r-1} i^m \equiv \frac{m}2 p^{2r} B_{m-1}\equiv -\frac{1}{3} p^{2r} B_{p-3}  \pmod{p^{2r+1}}$$
by Kummer congruence
\begin{equation*}
  \frac{B_{m-1}}{m-1}\equiv  \frac{B_{p-3}}{p-3}  \pmod{p}.
\end{equation*}
This completes the proof of the theorem when $n=2$.

For the case of $n=4$ we can use \eqref{equ:H(1,1,1)}, \eqref{equ:H(3)(111)}, and \eqref{equ:H12+H21(111)}
evaluate \eqref{equ:gspr} and get
\begin{equation*}
     \gs(p^r)\equiv -\frac{1}{5} p^{2r} B_{p-5}   \pmod{p^{2r+1}}.
\end{equation*}
So the theorem follows immediately from \eqref{equ:1stReduction}.

\section{Generalizations and finite multiple zeta values}
It is natural to ask if one can generalize Theorem~\ref{thm:main}
to the case of $n\ge 6$. By using integer relation detecting tool PSLQ
(the partial sum of least squares algorithm) developed originally
by Ferguson and Bailey \cite{FergusonBa1992} we can show that if
similar congruence holds for $n=6$, say
\begin{equation}\label{equ:casen=6}
\sum_{\substack{i_1+\cdots+i_6=p^2\\ i_1,\dots,i_6\in \frakP_p}}
\frac1{i_1i_2i_3i_4i_5i_6} \equiv c_6 p^2 B_{p-7}  \pmod{p^3}
\end{equation}
for some $c_6\in\Q$ and for all prime $p\ge 7$, then both the numerator
and the denominator of  $c_6$ must have at least 60 digits.

At first glance, it may seem impossible to use PSLQ to work with congruences.
However, we may use the following idea, say, in case $n=6$.
Let $S$ be the set of the first 1000 primes greater than 6.
Let $P$ be the product of these primes. By the Chinese Remainder Theorem
we can find two integers $A$ and $B$, between $0$ and $P^3$
so that
$$ A \equiv
\sum_{\substack{i_1+\cdots+i_6=p^2\\ i_1,\dots,i_6\in \frakP_p}}
\frac1{i_1i_2i_3i_4i_5i_6},\quad  B  \equiv p^2 B_{p-7}  \pmod{p^3}$$
for each prime in $S$. If \eqref{equ:casen=6} were true for some $c_6=a/b$
in reduced form with $a$ and $b$ both of reasonable sizes, then we would have
$$A-\frac{a}{b} B = N P^3$$
for some integer $N$. Thus we can use PSLQ to discover $a$ and $b$.

For any positive integers $n>m\ge 1$ and prime $p>n$, define
\begin{equation*}
R_n^{(m)}(p):=
 \sum_{\substack{i_1+\cdots+i_n=m p\\ i_1,\dots,i_n\in \frakP_p}}
\frac1{i_1\cdots i_n}  \pmod{p}  \in \Z/p \Z.
\end{equation*}
Let $\calP$ be the set of rational primes.
It turns out that the quantity
\begin{equation}\label{equ:casen=general}
\Big(R_n^{(m)}(p)\Big)_{p\in\calP} \in
\calA:=\prod_{p\in\calP} (\Z/p \Z)\bigg/\bigoplus_{p\in\calP}(\Z/p\Z)
\end{equation}
involves the finite multiple zeta values (FMZVs) defined as follows
(see \cite{Zhao2015a} for more details).
For all positive integers $s_1,\dots,s_d$, set
\begin{equation*}
\zeta_\calA(s_1,\dots,s_d):=\Big(\calH_p(s_d,\dots,s_1)\Big)_{p\in\calP}\in\calA.
\end{equation*}
The number $d$ is called the depth and $s_1+\dots+s_d$ the weight. Clearly,
$\Q$ can be embedded into $\calA$ diagonally. For any positive
integer $w$, let $\FMZV_w$ be the $\Q$-vector subspace generated by
all the FMZVs of weight $w$.
By the FMZV dimension conjecture discovered by Zagier and independently by
the author (see \cite{Zhao2015a}), we have the following data in Table 1.
\begin{table}[!h]
{
\begin{center}
\begin{tabular}{|c|c|c|c|c|c|c|c|c|c|c|c|c|c| } \hline
$w$ &0 & 1 & 2& 3& 4& 5& 6& 7& 8& 9& 10& 11& 12\\ \hline
$\dim \FMZV_{w}$ &1 & 0 &0 & 1 & 0 & 1& 1& 1 & 2 &2 &3 &4 & 5 \\ \hline
\end{tabular}
\end{center}
}
\caption{Numerically verified conjectural dimensions and $\FMZV_{w}$.}
\label{Table:PadovanTable}
\end{table}
By \cite[Theorem 1.7]{Zhao2008a}, we see that the $\calA$-Bernoulli number
\begin{equation*}
    \beta_w:=\Big(B_{p-w}/w \Big)_{p>w+2}=\zeta_\calA(w-1,1)/w \in \FMZV_w.
\end{equation*}
Here, we put 0 at all the components with $p<w+1$.
By stuffle relation, we obtain
\begin{equation*}
 \prod_{j=1}^n \beta_{w_j} \in \FMZV_{w_1+\cdots+w_n}.
\end{equation*}
Let $\BN_w$ be the subspace generated by all such products of
$\calA$-Bernoulli numbers of total weight $w$. From overwhelming evidence
we make the following conjecture.
\begin{conj}
For all positive integers $n,m\ge 1$, we have
\begin{equation*}
\Big(R_n^{(m)}(p)\Big)_{p\in\calP} \in \BN_n.
\end{equation*}
\end{conj}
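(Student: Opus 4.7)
The plan is to bootstrap the proof strategy of Theorem~\ref{thm:main}, applied modulo $p$ rather than modulo $p^{r+1}$. The first step is to generalise the reduction of Section~2: starting from the identity $i_1+\cdots+i_n=mp$ and exploiting the symmetry of the summand, one obtains
\begin{equation*}
R_n^{(m)}(p) \equiv \frac{n!}{mp}\,\gs_n^{(m)}(p) \pmod{p},
\end{equation*}
where $\gs_n^{(m)}(p)$ is the analogue of $\gs(p^r)$ in \eqref{equ:1stReduction}, summed over strictly increasing partial sums $0<u_1<\cdots<u_{n-1}<mp$ subject to the $\frakP_p$-conditions on each $u_j$ and on the consecutive differences $u_j-u_{j-1}$. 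Because of the prefactor $1/(mp)$, the inner sum $\gs_n^{(m)}(p)$ must be evaluated modulo $p^2$; this mirrors the reason the paper works modulo $p^{2r+1}$ when proving a congruence modulo $p^{r+1}$.

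The second step is an inclusion-exclusion generalising \eqref{equ:gspr}: decompose $\gs_n^{(m)}(p)$ according to the equivalence relation on $\{1,\dots,n-1\}$ induced by $u_i\equiv u_j\pmod{p}$, together with the datum of whether the common residue of each equivalence class is zero or a fixed unit in $\Z/p\Z$. Each resulting sub-sum is an MHS with congruence restrictions of the same flavour as $\calH_{p^r}^{(d)}$. Writing $u_k=x_k+v_k p$ inside each equivalence class, as in the proof of Lemma~\ref{lem:u2=pVanish}, the inner block-sum becomes a polynomial in the single-residue sums $S_k(x,p^2)$ from Lemma~\ref{lem:key}, multiplied by an outer MHS in the $v_k$-variables whose depth is strictly smaller than $n-1$.

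The third step is an induction on $n$. By Lemma~\ref{lem:key}(iv), $S_k(x,p^2)\equiv p\,x^{-k}+\frac{k}{2}p^2 x^{-k-1}\pmod{p^3}$; after summing over the units $x\in(\Z/p\Z)^\times$ one gets a power sum which, by \eqref{equ:SumOfPowers} and Kummer's congruences, collapses to a scalar multiple of some $\calA$-Bernoulli number $\beta_w$. The remaining outer MHS in the $v_k$-variables is, up to a power of $p$, an ordinary MHS of smaller depth; by the inductive hypothesis together with the identity $\zeta_\calA(w-1,1)=w\beta_w$ recalled just before the conjecture, this outer MHS reduces modulo $p$ to an element of a lower-weight $\BN$-space. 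Taking the product of weights then delivers membership in $\BN_n$.

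The main obstacle is controlling the combinatorial explosion of sub-sums. The number of set partitions of $\{1,\dots,n-1\}$ grows super-polynomially in $n$, and each partition produces a distinct stuffle-theoretic contribution; the ad hoc case analysis feasible for $n=4$ (or perhaps $n=6$) is unlikely to scale. A uniform proof will probably require encoding the reductions above using the quasi-shuffle Hopf algebra of finite MHSs and the weight filtration on $\FMZV$ studied in \cite{Zhao2015a}, together with a weight-preservation theorem asserting that every cancellation among products of $\beta_w$'s respects the total weight $n$. Establishing such a uniform weight-control statement seems to be the real heart of the conjecture.
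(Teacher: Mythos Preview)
The statement you are addressing is a \emph{conjecture} in the paper; the paper offers no proof, only supporting evidence for small $n$ from \eqref{equ:ZhouCai} and the cited works. So there is no argument in the paper to compare against, and your proposal should be read on its own merits as an attempted proof of an open problem.

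As such, it does not succeed, and indeed your final paragraph already concedes this. Let me make the gap precise. Your induction in step~3 does not close. The inductive hypothesis is that $\big(R_k^{(m')}(p)\big)_p\in\BN_k$ for $k<n$; but the ``outer MHS in the $v_k$-variables'' produced by your inclusion--exclusion is not an $R_k^{(m')}$-sum. It is a generic $p$-restricted multiple harmonic sum of lower depth, and such sums represent arbitrary elements of $\FMZV_w$, not of the strictly smaller subspace $\BN_w$. The identity $\zeta_\calA(w-1,1)=w\beta_w$ handles only one very special FMZV; it says nothing about, e.g., $\zeta_\calA(s_1,\dots,s_d)$ for $d\ge 2$ in general. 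Since for $w\ge 8$ one has (conjecturally, per Table~\ref{Table:PadovanTable}) $\dim\FMZV_w>\dim\BN_w$, reducing the sub-sums to FMZVs is genuinely weaker than what the conjecture asserts.

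Put differently: your steps 1--2 are a plausible (and essentially correct) reduction of $R_n^{(m)}(p)$ to a $\Q$-linear combination of finite multiple zeta values of weight $n$. That much is not in doubt and is implicit in the paper's approach to $n=4$. The entire content of the conjecture is the further claim that this particular linear combination lands in the Bernoulli-generated subspace $\BN_n$, and nothing in your sketch forces that. The ``weight-preservation theorem'' you invoke at the end is not a missing lemma to be filled in; it \emph{is} the conjecture.
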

This conjecture is supported by \eqref{equ:ZhouCai}
and the results in previous works \cite{MTWZ,Wang2014b,Wang2015,Zhao2007b}
when $n\le 10$. By the same argument as in \cite{Wang2014b}, it should be
straight-forward to prove the following results:
For all positive integers $n$, $m$ and primes $p>5$, we have
\begin{align*}
     R_4^{(m)}(p)\equiv&\, -\frac{4!}{5}m(m^2+1)B_{p-5} \cdot p \pmod{p^2},\\
     R_8^{(m)}(p)\equiv&\, \frac{112}{5}m(m^2+16)(m^2-1) B_{p-3}B_{p-5}  \pmod{p}.
\end{align*}
The corresponding result of $R_6^{(m)}(p)$ is proved by Wang
in \cite{Wang2014b}.

We conclude the paper by the following conjecture which
has been discovered numerically by Maple computation
using the PSLQ algorithm.
\begin{conj}
For any positive integer $m$ and all primes $p>11$, we have
\begin{align*}
R_{11}^{(m)}(p)\equiv&\, 88\cdot 5! \binom{m+2}{5}(m^2+33)
B_{p-3}^2 B_{p-5}\\
&\, + 10m(m^8+330 m^6+16401 m^4+152900 m^2+193248) B_{p-11} \pmod{p},\\
R_{12}^{(m)}(p)\equiv&\,
-\frac{55\cdot 8!}{9} \binom{m+3}{7} B_{p-3}^4\\
&\,-\frac{22\cdot 5!}{9} \binom{m+1}{3}(m^6+211m^4+6196m^2+32256) B_{p-3} B_{p-9}\\
&\,-\frac{66\cdot 4!}{7} \binom{m+1}{3}(m^6+187m^4+6508m^2+31392) B_{p-5} B_{p-7} \pmod{p}.
\end{align*}
\end{conj}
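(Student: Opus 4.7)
The plan is to mirror the strategy used for Theorem~\ref{thm:main} and in Wang's treatment of $R_6^{(m)}(p)$~\cite{Wang2014b}, now working only modulo $p$, which avoids the delicate super-congruence estimates of Sections~2--5. Iterating the peel-off trick behind~\eqref{equ:1stReduction} (apply $1=(i_1+\cdots+i_n)/(mp)$ and use the permutation symmetry of the indices) yields the exact rational identity
\begin{equation*}
R_n^{(m)}(p) \,=\, \frac{n!}{mp}\, \sigma_n^{(m)}(p), \qquad
\sigma_n^{(m)}(p) \,=\, \sum_{\substack{0<u_1<\cdots<u_{n-1}<mp\\ u_1,\, mp-u_{n-1}\in\frakP_p\\ u_{j+1}-u_j\in\frakP_p\ \forall j}} \frac{1}{u_1 u_2\cdots u_{n-1}}.
\end{equation*}
The bound $mp$ (rather than $p$) is the source of the polynomial factors in $m$ in the final answer, because after the $u_j$'s are reduced modulo $p$ the outer summations become evaluations of $\sum_{v=0}^{m-1} v^\ell$, which are polynomials in $m$ of the kind $\binom{m+a}{b}\cdot(\text{polynomial})$ visible in the conjecture.

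Next, apply inclusion-exclusion on the $p$-adic collision pattern of $(0,u_1,\dots,u_{n-1},mp)$: each constraint $u_{j+1}-u_j\in\frakP_p$ is negated by $u_{j+1}\equiv u_j\pmod p$, so $\sigma_n^{(m)}(p)$ expands as a signed sum over set partitions of $\{0,1,\dots,n\}$ indexed by the coincidence classes. By an analogue of Lemma~\ref{lem:key}, each restricted sum reduces modulo $p^2$ (which is what the $1/(mp)$ prefactor requires) to an ordinary $p$-restricted MHS $\calH_p(s_1,\dots,s_d)$ of weight $n$ multiplied by an explicit polynomial in $m$.

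The resulting weight-$n$ MHS are then attacked via the stuffle relations~\eqref{equ:stuffle}. Since $\calH_p(k)\equiv 0\pmod p$ for even $k$, and the Kummer-type identity $\calH_p(w-1,1)\equiv w\beta_w\pmod p$ from~\cite[Theorem 1.7]{Zhao2008a} converts the basic depth-two MHS into $\calA$-Bernoulli numbers, an induction on depth rewrites every such MHS modulo $p$ as a $\Q$-linear combination of weight-$n$ products of $\beta_w$. For weight $11$ the available basis is $\{\beta_{11},\beta_3^2\beta_5\}$ and for weight $12$ it is $\{\beta_3^4,\beta_3\beta_9,\beta_5\beta_7\}$, matching exactly the terms in the conjecture. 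The polynomial coefficients in $m$ can then be pinned down by specialising $m$ to sufficiently many values (varying $p$ so that the chosen Bernoulli products are $\calA$-linearly independent) and solving the resulting linear system over $\Q$, with PSLQ providing an independent cross-check.

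The main obstacle is the combinatorial explosion at depth $10$ or $11$: the inclusion-exclusion produces Bell-number many partition types, each stuffle expansion generates hundreds (if not thousands) of summands, and the polynomial-in-$m$ bookkeeping must be carried out symbolically, so a computer-algebra implementation is essentially mandatory. A more conceptual obstacle is that the stuffle reduction \emph{a priori} only proves membership in $\FMZV_n$, whereas the conjecture asserts membership in the (conjecturally strictly smaller) subspace $\BN_n$; for weights $11$ and $12$ this gap can in principle be closed by exhaustive enumeration of all weight-$n$ MHS relations modulo $p$, but a clean structural explanation of why $R_n^{(m)}(p)$ always lands in $\BN_n$ would presumably require a deeper understanding of the Hopf-algebraic structure on FMZVs.
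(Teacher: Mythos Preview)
The paper does not prove this statement at all: it is presented explicitly as a \emph{conjecture}, ``discovered numerically by Maple computation using the PSLQ algorithm,'' with no argument offered beyond the PSLQ heuristic described earlier in Section~7. So there is no paper proof to compare against, and any genuine argument you supply would already go strictly beyond what the paper establishes.

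That said, your sketch is a plausible plan of attack and does follow the template of Theorem~\ref{thm:main} and of Wang's $R_6^{(m)}$ computation. The peel-off identity, the inclusion--exclusion over $p$-adic collision patterns, and the stuffle reduction to weight-$n$ $p$-restricted MHS are all sound in principle and are exactly how the cases $n\le 10$ referenced in the paper were handled. Your observation that the $m$-dependence enters through power sums $\sum_{v=0}^{m-1} v^\ell$, producing the polynomial factors in $m$, is also correct.

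The genuine gap, which you yourself flag, is the step from $\FMZV_n$ to $\BN_n$. Stuffle relations alone will not do this: for weight $11$ the conjectural dimension of $\FMZV_{11}$ is $4$ (Table~\ref{Table:PadovanTable}), while the span of $\{\beta_{11},\beta_3^2\beta_5\}$ is at most $2$, and similarly at weight $12$ one has $\dim\FMZV_{12}=5$ against three Bernoulli products. Thus ``exhaustive enumeration of all weight-$n$ MHS relations modulo $p$'' cannot by itself force an arbitrary element of $\FMZV_n$ into $\BN_n$; you would need to exhibit specific relations showing that the \emph{particular} linear combination of MHS produced by your inclusion--exclusion lands in the smaller space. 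Your interpolation-in-$m$ step also presupposes this containment: specialising $m$ and solving a linear system only determines the coefficients \emph{given} that the answer is a $\Q$-combination of the stated Bernoulli products, which is precisely what remains to be proved. In short, your proposal is a credible outline for a computer-assisted verification, but as written it does not close the $\FMZV_n$ versus $\BN_n$ gap, and neither does the paper --- which is why the statement is labelled a conjecture.
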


\bigskip

\noindent
{\bf Acknowledgements.} The author was partially supported by the
USA NSF grant DMS~1162116.
This work was carried out while he was visiting IHES
at Bure sur Yvette, France, and ICMAT at Madrid, Spain, whose
supports are gratefully acknowledged.

\end{document}